\numberwithin{equation}{section}
\newtheorem{theorem}{Theorem}[section]
\newtheorem{lemma}[theorem]{Lemma}
\theoremstyle{definition}
\newtheorem{definition}{Definition}[section]
\newtheorem{corollary}{Corollary}[theorem]
\theoremstyle{proposition}
\newtheorem{proposition}{Proposition}[theorem]
\def\N{{\mathbb N}}
\def\R{{\mathbb R}}
\def\E{{\mathbb E}}
\def\J{{\mathfrak J}}
\def\O{{\mathcal O}}
\def\eps{\varepsilon}
\def\l{\ell}
\def\notagb{\notag \\}
\newcommand{\com}{,}
\newcommand{\ncomma}{, \ }
\newcommand{\ztp}{(0,2 \pi)}
\newcommand{\zp}{(0,\pi)}
\newcommand{\Envtwo}{\E [ N_{n} \ztp ]}
\newcommand{\asntoinfty}{\quad\mbox{as } n\to\infty}
\newcommand{\abs}[1]{\left\lvert #1 \right\rvert}
\newcommand{\ceil}[1]{\left\lceil #1 \right\rceil}
\newcommand\hw[1]{\noalign{\hrule height #1}}
\begin{document}

\title[Random trigonometric polynomials with pairwise equal blocks]{Real zeros of random trigonometric polynomials with pairwise equal blocks of coefficients}

\author{Ali Pirhadi}
\address{Department of Mathematics, Oklahoma State University, Stillwater, OK 74078, USA}
%
\email{pirhadi@math.okstate.edu}





\keywords{Random trigonometric polynomials, dependent coefficients, expected number of real zeros}

\subjclass[2010]{30C15, 26C10}

\begin{abstract}
It is well known that the expected number of real zeros of a random cosine polynomial $ V_n(x) = \sum_ {j=0} ^{n} a_j \cos (j x) \ncomma x \in \ztp $, with the $ a_j $ being standard Gaussian i.i.d. random variables  is asymptotically $ 2n / \sqrt{3} $. On the other hand, some of the previous works on the random cosine polynomials with dependent coefficients show that such polynomials have at least $ 2n / \sqrt{3} $ expected real zeros lying in one period.
In this paper we investigate two classes of random cosine polynomials with pairwise equal blocks of coefficients. First, we prove that a random cosine polynomial with the blocks of coefficients being of a fixed length and satisfying $ A_{2j}=A_{2j+1} $ possesses the same expected real zeros as the classical case. Afterwards, we study a case containing only two equal blocks of coefficients, and show that in this case significantly more real zeros should be expected compared to those of the classical case.
\end{abstract}

\maketitle



\section{Introduction}
Let $ I \subset \R $ be an interval and $ f_0, f_1, \ldots , f_n : I \to \R $ be some functions. A (real) random function $ F_n: I \to \R $ is defined as the linear combination 
$
F_n (x) \equiv F_n (x,\omega):= \sum _{j=0} ^{n} a_j(\omega) f_{j}(x) ,
$
where the coefficients $ a_{j}(\omega) $ are real-valued random variables defined on the same probability space 
$ (\Omega, \mathcal{A}, \mathbb{P} ).$ 
A random (algebraic) polynomial of degree $ n $ is a function of the form
$
P_n(x) := \sum _{j=0} ^{n} a_{j} x^{j} ,
$
where the $ a_{j} $ are random variables. The study of zeros of polynomials with random coefficients, which is a venerable topic in both analysis and probability theory, has been a fascinating subject since the 1930s. The early work in this area was done by Bloch and P\'olya \cite{BP} where they concluded that the expected number of real zeros of a random polynomial with coefficients of the same probability selected from the set $ \{ -1, 0, 1 \} $ is $ \O \!  \left(\sqrt{n}\right)  $ for sufficiently large $ n $. Later,  Littlewood and Offord \cite{LO1}-\cite{LO2} studied the same estimate in which the coefficients are Gaussian and uniformly distributed  in  the set $ \{ -1, 1 \} $ or in the interval $ ( -1, 1 ) $. In the case of real Gaussian coefficients, one of the essential works is by Kac \cite{Kac2} where he introduces an explicit integral formula to determine the expected number of real zeros.
The early history of the subject as well as many additional references and further directions of work on the expected number of real zeros may be found in the books of Bharucha-Reid and Sambandham \cite{BS} and of Farahmand \cite{Far1}.

We refer to 
$
T_n(x) := \sum _{j=0} ^{n} a_{j} \cos (j x) + b_{j} \sin (j x) \ncomma x \in \ztp ,
$
as a random trigonometric polynomial of degree $ n $ with the coefficients $ a_{j} $ and $ b_{j} $ being random variables. In 1966, Dunnage \cite{Dun} showed that for a random cosine polynomial 
$
V_n(x) := \sum_{j=0}^{n} a_j \cos(j x) 
$
with the $ a_j $ being independent and identically distributed (i.i.d.) random variables with standard Gaussian distribution, we have
\begin{equation*} \label{1.5}
\Envtwo =\frac{2n}{\sqrt{3}} + \O \! \left( n^{ 11/13} (\log n )^{3/13}\right) \quad\mbox{as } n \to\infty \com
\end{equation*}
where $ \E $ is the mathematical expectation, and  $ N_{n} \ztp $ is the number of real zeros of $ V_n $ in $ \ztp $. 
Two years later, Das \cite{D} proved that the error term in Dunnage's result was $ \O \!  \left(\sqrt{n}\right)  $ for large $ n $, which was later significantly improved to $ \O (1) $ by Wilkins \cite{W}. There has also been a growing interest in both reducing and imposing different restrictions on the class of coefficients of a random trigonometric polynomial. For instance, the study of a certain class of nonzero mean coefficients appears in the work of Sambandham and Renganathan \cite{RS1} and that of a more general case of $ K $-level crossing with $ K \ne 0 $ and  $ \mu \ne 0 $ (the coefficients having nonzero mean) of Farahmand \cite{Far2}. 

When it comes to the case of random cosine polynomials with dependent coefficients, two cases are of great interest. Sambandham \cite{Sam}, and later  in a collaboration with Renganathan \cite{RS2}, investigated the cases of the constant correlation 
$ \E (a_i a_i ) = 1,$ $ \E (a_i a_j ) = \rho \in ( 0, 1 ) $
and the geometric correlation 
$ 
\E ( a_i a_j ) = \rho ^{  \abs{i - j} } ,
$ 
and showed in both cases the expected number of real roots asymptotically remain as $  2n / \sqrt{3}. $
More recently, the class of dependent coefficients has attracted more attention.
Angst, Dalmao and Poly \cite{ADP} proved that the expected number of real zeros of $ T_n  $ is asymptotically
$  
2n/\sqrt{3}
$ 
when $ a_0 = 0 $ and other coefficients are given by two independent stationary Gaussian processes with the same correlation function $ \rho $ and under mild assumptions of some spectral function associated with it. Another interesting direction related to the study of real zeros of random trigonometric polynomials with dependent coefficients is the work of Farahmand and Li \cite{FL}. The authors provide asymptotic estimates for the expected number of real zeros of polynomials $ T_n $ and $ V_n $ having \emph{palindromic} properties 
$ 
a_{j} = a_{ n - j } $ and $ b_{j} = b_{ n - j } .
$
According to their effort \cite[pp. 1880-1882]{FL} to show that 
$  
\Envtwo \sim n/ \sqrt{3} 
$ 
for the polynomial $ V_n $, we wish to capture the reader's attention that since the $ a_{j} $ are palindromic, for an odd $ n $, we have 
\begin{align*}
V_n  ( x ) 
& = \sum _{j=0} ^{n} a_j \cos (j x) 
= \sum _{j=0} ^{ m } a_j  \left(\cos (jx) + \cos (n-j)x\right)  
= 2 \cos ( nx/2 ) \cdot V^{*}_n (x) \com
\end{align*}
where $ m=(n-1)/2 $ and $ V^{*}_n (x) := \sum _{j=0} ^{ m } a_j \cos ( n/2 -j ) x $.
Let $ N_{n} \ztp $ and $ N^{*}_{n} \ztp $ denote the number of real zeros of $  V_n   $ and $  V^{*}_n   $ in $ \ztp $ respectively, so considering the $ n $ distinct deterministic roots of $ \cos ( nx/2 ) $,  in one period, the aforementioned result should be modified as
\begin{equation} \label{a1}
\Envtwo 
= \E  [n + N^{*}_{n} \ztp] 
= n + n / \sqrt{3} + \O \! \left( n^{3/4} \right) \asntoinfty \com
\end{equation}
which is entirely consistent with Proposition 2.1 of \cite{CFI}. 
The moral of the results mentioned above is that $ 2n / \sqrt{3} $ is the least number of real roots one can possibly expect when the independence of the coefficients is removed. 
The objective of this manuscript is to introduce two cases with dependent coefficients satisfying the above property.  
The main results of this paper are stated in Section \ref{Sec2} embarking with the general definition of a random function with pairwise equal blocks of coefficients and proceeding then with two theorems in each of which we deal with a random cosine polynomial where the independence setting on the coefficients is relaxed. Section \ref{Sec4} gives the proofs of the theorems appear in Section \ref{Sec2}.

\section{Random trigonometric polynomials with pairwise equal blocks of coefficients} \label{Sec2}
As a direct consequence of (\ref{a1}), it turns out that a random cosine polynomial with palindromic and normally distributed coefficients, on average, has almost $ 36.6\% $ more real zeros than that of the classical case with independent coefficients.
This motivates us to investigate how many real zeros, compared with the case of independent coefficients, we should expect if a certain restriction is imposed upon the coefficients, and more generally, with such a condition whether the number of these real zeros grows, drops or remains the same.
For instance, one can observe that if the coefficients of a random cosine polynomial of degree $ n $ are forced to have the following property $ a_{2j}=a_{2j+1} ,$ then the expected number of real zeros of such a polynomial is asymptotically $ 2n/\sqrt{3} ,$ as if the coefficients are independent.
Since any coefficient can be looked as a block of coefficient(s) of length $ 1 $, it is reasonable to generalize the previous questions and to study the number of real zeros when a certain restriction is applied to the \emph{blocks} of coefficients instead. 
In this section, we discuss two of such cases where the blocks of coefficients are pairwise equal in a given fashion.
\begin{definition}
A block of coefficients of length $ \l $ is defined as
$
A=  \left(a_{i}, a_{i+1}, \ldots , a_{i + \l -1}\right)  .
$
\end{definition}
Let $ n \in \N $ and 
$ 
A= \left(a_{0}, a_{1}, \ldots , a_{n}\right) ,
$ 
and assume that
$ 
n=2 \l m +r \ncomma r \in \{-1, \ldots , 2 \l -2 \} .
$ 
We may construct a sequence of $ 2m $ blocks of coefficients of length $ \l $ as 
$ 
\{ A_j \}_{j=0}^{2m-1} ,
$
and the set of the remaining coefficients
$ 
\tilde{A} = A \setminus \bigcup_{j} A_j  
$
containing $ r+1 $ elements.
\begin{definition}
Let $ n \in \N $ and
$ 
A= \left(a_{0}, a_{1}, \ldots , a_{n}\right) , 
$ and assume $ \{ A_j \}_{j=0}^{2m-1} $
and $ \tilde{A} $ as above. Suppose for each $ A_j  $ there exists a unique $ j' \neq j $ with $ A_{j} = A_{j'} ,$ and define
$
\J = \{ j=0 , \ldots , 2m-1 : A_{j} = A_{j'} \ \text{and} \ j < j' \}.
$
A random function $ F_n(x) = \sum _{j=0} ^{n} a_j f_j(x)  $ is called a random function with pairwise equal blocks of coefficients if $ \bigcup_{j \in \J } A_j \cup \tilde{A} $ is a family of i.i.d. random variables.
\end{definition}

Let $ n=2 \l m +r \ncomma r \in  \{-1, \ldots , 2 \l -2\} ,$
and $ V_n(x) = \sum _{j=0} ^{n} a_j \cos (j x) $ be a random cosine polynomial with pairwise equal blocks of coefficients. 
First, we consider a sequence of $ 2m $ blocks of coefficients of length $ \l $ in the following fashion: 
$  
A_0, A_{1}, \ldots, A_{2m-2}, A_{2m-1}, \tilde{A} ,
$  
where  
$ 	A_i=  \left(a_{\l i}, a_{\l i+1}, \ldots , a_{\l i + \l -1}\right) .$
We further assume that
$ A_{2j} = A_{2j+1} \ncomma j=0 , \ldots , m-1 ,$
and show that such a restriction does not affect the asymptotic expected number of real zeros of $ V_n $. In other words,
\subpdfbookmark{Theorem 2.1}{Thm2.1}
\begin{theorem} \label{Thm2.1}
Fix $ \l \in \mathbb{N} $, and let $ n= 2\l m +r \ncomma m \in \N ,$ 
and $ r \in \{-1, \ldots , 2 \l -2 \} .$
Assume $ V_n(x) = \sum _{j=0} ^{n} a_j \cos (j x) \ncomma x \in \ztp ,$
and $ \bigcup_{j=0}^{m-1} A_{2j} \cup \tilde{A} $ is a family of i.i.d. random variables with Gaussian distribution $\mathcal{N}(0, \sigma^2) .$ For $ j=0 , \ldots , m-1 $ and $ k=0 , \ldots , \l-1 ,$ we further assume $  a_{2 \l j+k} = a_{2 \l j + \l +k} ,$ i.e., $ A_{2j} = A_{2j+1}.$ Then
\begin{equation*}  
\Envtwo
=  \frac{2n}{ \sqrt{3}} + \O \! \left(n^{5/6}\right)  \asntoinfty.
\end{equation*}  
\end{theorem}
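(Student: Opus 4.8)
\medskip
\noindent\textbf{Step 1 (factorization).}
The plan is to exploit the constraint $A_{2j}=A_{2j+1}$ to split a short deterministic factor off $V_n$ and then to run the Kac--Rice formula, controlling the oscillatory remainders by geometric sums. Grouping the $(2j)$-th and $(2j+1)$-th blocks and applying $\cos\alpha+\cos\beta=2\cos\frac{\alpha+\beta}{2}\cos\frac{\alpha-\beta}{2}$ term by term gives
\[
V_n(x)=2\cos\!\Big(\tfrac{\l x}{2}\Big)\,W_n(x)+R_n(x),\qquad
W_n(x):=\sum_{j=0}^{m-1}\sum_{k=0}^{\l-1}a_{2\l j+k}\cos\!\Big(\big(2\l j+k+\tfrac{\l}{2}\big)x\Big),
\]
where $R_n(x)=\sum_{a_i\in\tilde A}a_i\cos(ix)$ collects the $r+1=O(1)$ leftover coefficients and $W_n$ is a random cosine polynomial of degree $\asymp n$ whose $\l m$ coefficients are precisely the i.i.d.\ family $\bigcup_j A_{2j}$ (for odd $\l$ the half-integer frequencies cancel in the product, so every term of $2\cos(\tfrac{\l x}{2})W_n$ carries an integer frequency). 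Since $W_n$ and $R_n$ depend on disjoint sub-families, $V_n$ is a centred Gaussian process whose covariance data split: writing $A=\E[V_n^2]$, $B=\E[V_nV_n']=\tfrac12A'$, $C=\E[V_n'^2]$, one has $A=4\cos^2(\tfrac{\l x}{2})\,\E[W_n^2]+\E[R_n^2]$, and, using $V_n'=-\l\sin(\tfrac{\l x}{2})W_n+2\cos(\tfrac{\l x}{2})W_n'+R_n'$,
\[
C=\l^2\sin^2(\tfrac{\l x}{2})\,\E[W_n^2]-2\l\sin(\l x)\,\E[W_nW_n']+4\cos^2(\tfrac{\l x}{2})\,\E[W_n'^2]+\E[R_n'^2].
\]

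\smallskip
\noindent\textbf{Step 2 (main term).}
Each of $\E[W_n^2]$ and $\E[W_n'^2]$ splits into a smooth leading part and a mean-zero oscillatory part bounded by products of geometric sums: $\E[W_n(x)^2]=\tfrac12\sigma^2\l m+(\mathrm{osc})$ and $\E[W_n'(x)^2]=\tfrac23\sigma^2\l^3m^3(1+O(1/m))+(\mathrm{osc})$, while $\E[W_nW_n']$ is purely oscillatory, $\E[R_n^2]=O(1)$, and $\E[R_n'^2]=O(n^2)$. Hence the leading parts of $A$ and $C$ are $A_0=2\sigma^2\l m\cos^2(\tfrac{\l x}{2})$ and $C_0=\tfrac83\sigma^2\l^3m^3\cos^2(\tfrac{\l x}{2})$, with $B_0=\tfrac12A_0'$, and a direct computation gives
\[
\frac{\sqrt{A_0C_0-\tfrac14A_0'^2}}{A_0}
=\l\sqrt{\tfrac43m^2-\tfrac14\tan^2\!\big(\tfrac{\l x}{2}\big)}
=\frac{2\l m}{\sqrt3}+O\!\Big(\frac{\tan^2(\tfrac{\l x}{2})}{m}\Big).
\]
Since $2\l m=n-r$ with $r=O(1)$, integrating the leading part of the Kac--Rice density over $\ztp$ produces $\frac1\pi\cdot2\pi\cdot\frac{2\l m}{\sqrt3}=\frac{4\l m}{\sqrt3}=\frac{2n}{\sqrt3}+O(1)$.

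\smallskip
\noindent\textbf{Step 3 (error control --- the main obstacle).}
The difficulty is that $A_0$ (and the leading part of $C$) vanishes to second order at the $\l$ zeros $x_1,\dots,x_\l$ of $\cos(\tfrac{\l x}{2})$ in $\ztp$: there $V_n$ collapses to the high-frequency polynomial $R_n$ and the leading density ceases to be real. I would excise arcs $I_i=(x_i-\delta,x_i+\delta)$ with $\delta=n^{-1/6}$ and put $G=\ztp\setminus\bigcup_iI_i$. On $G$ one has $\cos^2(\tfrac{\l x}{2})\gtrsim\delta^2$; using the $L^1$-bounds on the oscillatory remainders, the fact that the oscillatory part of $A$ itself carries the factor $\cos^2(\tfrac{\l x}{2})$, and a separate cheap treatment of the finitely many resonance points of the geometric sums that lie in $G$, the contribution of $G$ to the Kac--Rice integral equals $\tfrac{2n}{\sqrt3}$ up to an error that is at most a small negative power of $\delta$ times a power of $\log n$ --- comfortably $o(n^{5/6})$. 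On each $I_i$ I would bound $\E[N_n(I_i)]$ crudely: $W_n$ has only $O(n\delta)$ real zeros in $I_i$ in expectation (its degree is $\asymp n$), the zeros of $2\cos(\tfrac{\l x}{2})W_n$ in $I_i$ are those together with $x_i$, and adjoining the small perturbation $R_n$ changes the count by only a bounded amount on average, so $\E[N_n(I_i)]=O(n\delta+1)$ and $\sum_i\E[N_n(I_i)]=O(n\delta)=O(n^{5/6})$, which is the dominant error. I expect essentially all the work to lie in Step 3 --- sharpening the oscillatory estimates on $G$ near the surviving resonance points and near the $x_i$, and turning the heuristics for $\E[N_n(I_i)]$ into honest bounds; Steps 1--2 are routine.

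\smallskip
\noindent An alternative organisation I would also consider is the following: first show that the expected number of real zeros of $W_n$ in $\ztp$ is $\tfrac{2n}{\sqrt3}+o(n^{5/6})$ by a clean Kac--Rice applied to $W_n$ itself, whose variance $\E[W_n^2]=\tfrac12\sigma^2\l m+(\mathrm{osc})$ fails to vanish only at $O(1)$ isolated points (handled separately); note that the real zeros of $2\cos(\tfrac{\l x}{2})W_n$ in $\ztp$ are the $\l$ zeros of $\cos(\tfrac{\l x}{2})$ together with those of $W_n$; and then control the difference between $N_n\ztp$ and this count by a perturbation argument whose cost is governed by the expected number of near-double zeros of $W_n$ and by the $\l$ short arcs on which $R_n$ dominates $2\cos(\tfrac{\l x}{2})W_n$.
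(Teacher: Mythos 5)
Your overall architecture is the same as the paper's: factor out $2\cos(\l x/2)$ using $A_{2j}=A_{2j+1}$, compute the Kac--Rice covariance data $A,B,C$ as a smooth leading part plus oscillatory remainders controlled by geometric sums, run Kac--Rice on the complement of $O(1)$ excised arcs of width $n^{-1/6}$ around the zeros of $\cos(\l x/2)$ and the resonance points of the geometric sums, and absorb the excised arcs into an $O(n^{5/6})$ error. Your Step 2 computation of the density, $\l\sqrt{\tfrac43 m^2-\tfrac14\tan^2(\l x/2)}=\tfrac{2\l m}{\sqrt3}+O(\tan^2(\l x/2)/m)$, is consistent with the paper's \eqref{d1}, \eqref{d6}, \eqref{d7} and correctly yields the main term $2n/\sqrt3$.

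The genuine gap is in Step 3, in the bound $\E[N_n(I_i)]=O(n\delta)$ on the excised arcs. Your proposed justification --- that $W_n$ has $O(n\delta)$ expected zeros in $I_i$ and that ``adjoining the small perturbation $R_n$ changes the count by only a bounded amount on average'' --- does not hold up. On $I_i$ the factor $2\cos(\l x/2)$ is of size $O(\delta)$, so near $x_i$ it is $R_n$ (with frequencies up to $n$, hence derivative of size $n$) that dominates $2\cos(\l x/2)W_n$, not the other way round; and in general perturbing a function by something comparable to it in size can create many new zeros, so no $O(1)$ perturbation bound is available. The paper closes exactly this hole with its Lemma \ref{Lem3.1}: writing $V_n(x)=e^{-inx}P_{2n}(e^{ix})$ and invoking the Pritsker--Yeager equidistribution estimate for zeros of random polynomials (which requires only that the coefficients be Gaussian, \emph{not} independent), one gets $\E[N_n(J)]=O(n^{1-a})$ for \emph{any} interval $J$ of length $O(n^{-a})$, applied directly to the full dependent polynomial $V_n$ rather than to $W_n$ plus a perturbation. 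With that lemma in place of your perturbation heuristic, your plan goes through; note also that at the optimal $a=1/6$ the good-set error from the $\sec^4(\l x/2)\,\O(n^{-1+a})$ correction is itself $\O(n^{5a})=\O(n^{5/6})$, the same order as the excised-arc contribution, not $o(n^{5/6})$ as you suggest --- harmless for the stated result, but worth recording accurately.
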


Similarly, if we define 
$ 
B= \left(b_{0}, b_{1}, \ldots , b_{n}\right) , 
$
$ 	B_i=  \left(b_{\l i}, b_{\l i+1}, \ldots , b_{\l i + \l -1}\right) $
and 
$ 
\tilde{B} = B \setminus \bigcup_{j=0}^{2m-1} B_j  ,
$
we have the following theorem which we will omit the proof since it follows the same procedure as the proof of Theorem \ref{Thm2.1}.
\currentpdfbookmark{Theorem 2.2}{Thm2.2}
\begin{theorem} \label{Thm2.2}
Fix $ \l \in \mathbb{N} $, and let $ n= 2\l m +r \ncomma m \in \N ,$ 
and $ r \in \{-1, \ldots , 2 \l -2 \} .$
Assume $ T_n(x) = \sum _{j=0} ^{n} a_{j} \cos (j x) + b_{j} \sin (j x) \ncomma x \in \ztp ,$
and $ \bigcup_{j=0}^{m-1} \left(A_{2j} \cup B_{2j}\right) \cup (\tilde{A} \cup \tilde{B})$ is a family of i.i.d. random variables with Gaussian distribution $\mathcal{N}(0, \sigma^2) .$ For $ j=0 , \ldots , m-1 $, we further assume $ A_{2j} = A_{2j+1}$ and $ B_{2j} = B_{2j+1}.$ Then
\begin{equation*}  
\Envtwo
=  \frac{2n}{ \sqrt{3}} + \O \! \left(n^{4/5}\right)  \asntoinfty.
\end{equation*}  
\end{theorem}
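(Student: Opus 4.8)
My plan is to run the same argument as for Theorem~\ref{Thm2.1}, now applied to $T_n$. Under the constraints $A_{2j}=A_{2j+1}$ and $B_{2j}=B_{2j+1}$ the surviving coefficients $\bigcup_{j=0}^{m-1}(A_{2j}\cup B_{2j})\cup(\tilde A\cup\tilde B)$ are i.i.d.\ $\mathcal N(0,\sigma^2)$, so $T_n$ is a centered Gaussian process on $\ztp$ and, at every point where its variance is positive, the Kac--Rice formula gives
\[\Envtwo=\frac1\pi\int_0^{2\pi}\frac{\sqrt{A(x)C(x)-B(x)^2}}{A(x)}\,dx\ \le\ \frac1\pi\int_0^{2\pi}\sqrt{\frac{C(x)}{A(x)}}\,dx,\]
where $A(x)=\E[T_n(x)^2]$, $C(x)=\E[T_n'(x)^2]$ and $B(x)=\E[T_n(x)T_n'(x)]=\tfrac12A'(x)$.

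The first step is to evaluate $A$, $B$, $C$ from the block structure. Grouping each constrained pair of blocks and using $(\cos\alpha+\cos\beta)^2+(\sin\alpha+\sin\beta)^2=2+2\cos(\alpha-\beta)$ with $\alpha=(2\l j+k)x$, $\beta=(2\l j+\l+k)x$ (so $\alpha-\beta=-\l x$), I expect the clean formula $A(x)=\sigma^2\bigl(4\l m\cos^2(\l x/2)+r+1\bigr)$, together with $C(x)=\sigma^2\bigl(\cos^2(\l x/2)\sum_{p\in P}(2p+\l)^2+\l^{3}m\sin^2(\l x/2)+\sum_{i=2\l m}^{n}i^{2}\bigr)$, where $P=\{2\l j+k:0\le j\le m-1,\ 0\le k\le\l-1\}$ is the set of indices occupied by the even-numbered blocks. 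The recurring factor $\cos^2(\l x/2)$ is the trace of the near-factorization $T_n(x)=2\cos(\l x/2)\,T^{*}_n(x)+R_n(x)$, where $T^{*}_n(x)=\sum_{p\in P}\bigl(a_p\cos((p+\l/2)x)+b_p\sin((p+\l/2)x)\bigr)$ is a \emph{stationary} random trigonometric polynomial with i.i.d.\ Gaussian coefficients and $R_n(x)=\sum_{i=2\l m}^{n}(a_i\cos ix+b_i\sin ix)$ is a polynomial of degree $\le n$ built from only the $r+1$ surviving coefficient pairs of $\tilde A\cup\tilde B$.

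Next I would estimate $\sum_{p\in P}(2p+\l)^2=\tfrac23(2\l m)^3+O(n^{2})$ and split $\ztp$ at the $\l$ zeros $x_1,\dots,x_\l$ of $\cos(\l x/2)$, writing $G_\delta=\ztp\setminus\bigcup_{k}(x_k-\delta,x_k+\delta)$ and letting $B_\delta$ be the complement. On $G_\delta$ the factors $\cos^2(\l x/2)$ in $A$ and $C$ cancel, so $C(x)/A(x)=\tfrac13(2\l m)^2\bigl(1+O(1/(n\delta^2))\bigr)$ and $B(x)^2/(A(x)C(x))$ is negligible; hence the Kac--Rice integrand is $\tfrac{n}{\pi\sqrt3}+O\bigl(\cos^{-2}(\l x/2)\bigr)$ there, and integrating over $G_\delta$ gives $\tfrac{2n}{\sqrt3}$ minus the $O(n\delta)$ mass of the deleted windows plus an integrated remainder of order $\delta^{-1}$. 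On $B_\delta$ I would use the second inequality above with $A(x)\ge\sigma^2(r+1)>0$ and $C(x)=O\bigl(n^{3}\cos^2(\l x/2)+n^{2}\bigr)$, which (for $\delta$ small) bounds $\sqrt{C/A}$ and makes $\E[N_n(B_\delta)]$ small; a careful choice of $\delta$ then balances the losses and produces the error $O(n^{4/5})$.

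Two points need separate attention. First, the degenerate case $r=-1$: then $\tilde A=\tilde B=\varnothing$, so $T_n=2\cos(\l x/2)\,T^{*}_n$ \emph{exactly}, hence $\Envtwo=\l+\Enwtwo$, and since $T^{*}_n$ is stationary Rice's formula gives $\Enwtwo=\tfrac{2n}{\sqrt3}+O(1)$ directly, comfortably inside the claimed bound. Second, and this is where I expect the real work: controlling the expected number of zeros inside the $\delta$-windows around the $x_k$, where $A$, $B$ and $C$ all degenerate and the high-degree but few-term tail $R_n$ becomes the leading part of $T_n$. The width $\delta$ has to be tuned so that the $O(n\delta)$ mass lost from the principal integral is matched against the bound available inside the windows; it is this trade-off that fixes the exponent, and the slightly larger exponent $5/6$ in Theorem~\ref{Thm2.1} can be traced to the extra oscillatory terms that occur in $\E[V_n(x)^2]$ but not in $\E[T_n(x)^2]$.
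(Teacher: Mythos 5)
Your proposal is correct, but it does not follow the route the paper intends. The paper omits this proof precisely because it is meant to be a verbatim transcription of the proof of Theorem~\ref{Thm2.1}: split $[0,\pi/\l]$ into $E$ and the exceptional set $F$ of width $n^{-a}$, estimate $A$, $B$, $C$ on $E$ through the oscillatory-sum bounds of Lemma~\ref{Lem4.1} and Corollary~\ref{Cor4.1.1}, dispose of $F$ with the unnumbered Proposition following Lemma~\ref{Lem3.1} (the $T_n$ analogue, included in the paper for exactly this purpose, which rests on the complex zero-counting result of \cite{PY}), and then optimize $a$; the exponent $4/5$ arises because the identity $\cos^2+\sin^2=1$ kills the oscillatory error terms in $A$, $B$, $C$, so the relative error under the square root becomes $\sec^4(\l x/2)\,\O(n^{-1})=\O(n^{-1+4a})$ and the balance $4a=1-a$ gives $a=1/5$. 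You instead exploit the exact closed forms $A(x)=\sigma^2(4\l m\cos^2(\l x/2)+r+1)$ and the corresponding formula for $C$, replace the zero-counting lemma on the windows by the elementary bound $\sqrt{A C-B^2}/A\leqslant\sqrt{C/A}$ together with the uniform lower bound $A\geqslant\sigma^2(r+1)$ when $r\geqslant0$, and treat $r=-1$ by the exact factorization $T_n=2\cos(\l x/2)T_n^{*}$ with $T_n^{*}$ stationary. All of these steps check out (your formulas for $A$, $B=\tfrac12A'$, and $C$ are correct, and $B=\O(n\abs{\cos(\l x/2)})$ does make $B^2/(AC)$ negligible on $G_\delta$), and your route is more elementary in that it never invokes \cite{PY}. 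One quantitative remark: your own bounds give $\E[N_n(B_\delta)]=\O(n^{3/2}\delta^2+n\delta)$ and a main-term error $\O(n\delta+\delta^{-1})$, so the optimal choice is $\delta\asymp n^{-1/2}$, which yields the sharper error $\O(n^{1/2})$ rather than merely $\O(n^{4/5})$; conversely, $\delta\asymp n^{-1/5}$ (the naive transcription of $a=1/5$) would make the window term $n^{3/2}\delta^2=n^{11/10}$ blow up, so the trade-off does not ``fix'' the exponent at $4/5$ --- it beats it. This is a strengthening, not a gap, but you should state the choice of $\delta$ explicitly rather than leaving it to a balancing argument whose outcome you have slightly misjudged.
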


In the following theorem, we investigate the case of two equal blocks of coefficients of length $\ceil{n/2} $ as $ A_0, A_1, \tilde{A} ,$ where 
$ A_0=  \left(a_{0}, a_{1}, \ldots , a_{\ceil{n/2} -1}\right)$ and
$ A_1=  \left(a_{\ceil{n/2}}, a_{\ceil{n/2}+1}, \ldots , a_{2\ceil{n/2} -1}\right).$
Note that $ \tilde{A}=(a_n) $ if $ n $ is even, and it is empty otherwise.
\currentpdfbookmark{Theorem 2.3}{Thm2.3}
\begin{theorem} \label{Thm2.3}
Let $ V_n(x) = \sum _{j=0} ^{n} a_j \cos (j x) \ncomma n \in \N ,$ and $ x \in \ztp .$
Assume $ A_0 \cup \tilde{A} $ is a family of i.i.d. random variables with Gaussian distribution $\mathcal{N}(0, \sigma^2) .$ For $ j=0 , \ldots , m = \ceil{n/2} -1 ,$ we further assume $ a_{j} = a_{j + \ceil{n/2} }$, that is, $ A_{0} = A_{1}.$ Then
\begin{align*}  
\Envtwo
& = \frac{ n }{ 2  } + \frac{\sqrt{13} }{ 2 \sqrt{3}} \, n + \O  \! \left(n^{5/6}\right) \asntoinfty. 
\end{align*}  
\end{theorem}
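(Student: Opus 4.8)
The plan is to use the hypothesis $A_0=A_1$ to factor $V_n$, peel off the deterministic zeros that this factorization creates, and apply the Kac--Rice formula to the surviving random factor. Put $h:=\ceil{n/2}$, so the hypothesis reads $a_{j}=a_{j+h}$ for $j=0,\dots,h-1$. When $n$ is odd, $n=2h-1$ and every coefficient is paired, so the product-to-sum identity $\cos(jx)+\cos((j+h)x)=2\cos(hx/2)\cos((j+h/2)x)$ gives
\[
V_n(x)=\sum_{j=0}^{h-1}a_j\bigl(\cos(jx)+\cos((j+h)x)\bigr)=2\cos(hx/2)\,W(x),\qquad W(x):=\sum_{j=0}^{h-1}a_j\cos\bigl((j+h/2)x\bigr).
\]
When $n$ is even, $n=2h$, the set $\tilde A=(a_n)$ is one unpaired coefficient, and $V_n(x)=2\cos(hx/2)W(x)+a_n\cos(nx)$. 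The factor $\cos(hx/2)$ has exactly the $h$ simple zeros $x=(2k+1)\pi/h$, $k=0,\dots,h-1$, in $\ztp$; since $h=n/2+\O(1)$ this already yields $n/2+\O(1)$ zeros. Almost surely $W$ shares at most the single point $\pi$ with $\cos(hx/2)$, so for odd $n$ one gets $\Envtwo=h+\E[N_{W}\ztp]+\O(1)$, where $N_{W}\ztp$ counts the zeros of $W$ in $\ztp$.

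Next I would apply Kac--Rice to the centered Gaussian process $W$, namely $\E[N_{W}\ztp]=\tfrac1\pi\int_0^{2\pi}\sqrt{A(x)C(x)-B(x)^2}/A(x)\,dx$ with $A=\E[W^2]$, $B=\E[WW']$, $C=\E[(W')^2]$. Writing $\sum_j\cos^2((j+h/2)x)=\tfrac h2+\tfrac12\sum_j\cos((h+2j)x)$ and summing the geometric series gives $A(x)=\tfrac{\sigma^2h}{2}+\O(\sigma^2/\abs{\sin x})$; differentiating and using Abel summation gives $B(x)=\O(\sigma^2h/\abs{\sin x})$ and $C(x)=\tfrac{\sigma^2}{2}\sum_{j=0}^{h-1}(j+h/2)^2+\O(\sigma^2h^2/\abs{\sin x})$. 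The new constant is produced by the elementary identity $\sum_{j=0}^{h-1}(j+h/2)^2=\tfrac{13}{12}h^3+\O(h^2)$, which turns the previous line into $C(x)=\tfrac{13\sigma^2h^3}{24}+\O(\sigma^2h^2/\abs{\sin x})$, so that for $x$ bounded away from $\{0,\pi,2\pi\}$
\[
\frac{\sqrt{A(x)C(x)-B(x)^2}}{A(x)}=\sqrt{\tfrac{13}{12}}\,h+\O\!\Bigl(\tfrac1{\abs{\sin x}}\Bigr)=\frac{\sqrt{13}}{2\sqrt3}\,h+\O\!\Bigl(\tfrac1{\abs{\sin x}}\Bigr).
\]
Integrating over $\ztp$ with $\delta$-neighborhoods of $0,\pi,2\pi$ removed yields $\tfrac1\pi\cdot 2\pi\cdot\tfrac{\sqrt{13}}{2\sqrt3}h+\O(\log(1/\delta))=\tfrac{\sqrt{13}}{\sqrt3}h+\O(\log(1/\delta))$, which, using $h=n/2+\O(1)$, equals $\tfrac{\sqrt{13}}{2\sqrt3}n+\O(\log(1/\delta))$.

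It remains to control the two leftover contributions, and this is where the error term is picked up. On the $\delta$-neighborhoods of $0,\pi,2\pi$ the above expansions degenerate, but there $A(x)$ stays of order $\sigma^2h$ while $C(x)=\O(\sigma^2h^3)$ (the only subtlety being $x=\pi$ when $h$ is odd, where $W$ has a harmless simple deterministic zero, the Kac--Rice density nonetheless staying bounded by an $\O(h)$ quantity since $B$ also vanishes there), so $\E[N_W(I)]=\O(h\abs I)$ on such $I$, with a sharper bound near $0,2\pi$ coming from $C(x)=\O(\sigma^2h^5x^2)$; choosing $\delta$ to balance $h\delta$ against $\log(1/\delta)$ absorbs all of this into $\O(n^{5/6})$. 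For even $n$ one must additionally dispose of $a_n\cos(nx)$: since $\abs{a_n\cos(nx)}\le\abs{a_n}$ is, with overwhelming probability, far smaller than $\abs{2\cos(hx/2)W(x)}\asymp\sqrt h\,\abs{\cos(hx/2)}$ away from $\O(h^{-3/2})$-neighborhoods of the points $(2k+1)\pi/h$ and of the zeros of $W$, a Rouch\'e/continuity argument confines all zeros of $V_n$ to that small set and shows the resulting count differs from $h+N_W\ztp$ by only $\O(n^{5/6})$ in expectation; equivalently one may run Kac--Rice directly on $V_n$, whose variance $\E[V_n^2]=2\sigma^2h\cos^2(hx/2)+\sigma^2\cos^2(nx)+\O(\sigma^2/\abs{\sin x})$ never vanishes, the contribution of each interval around $(2k+1)\pi/h$ being $1+o(1)$. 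Assembling the pieces gives $\Envtwo=h+\E[N_{W}\ztp]+\O(n^{5/6})=\tfrac n2+\tfrac{\sqrt{13}}{2\sqrt3}n+\O(n^{5/6})$.

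The algebraic heart of the argument — that $\E[(W')^2]\sim\tfrac{13}{12}h^3\cdot\tfrac{\sigma^2}{2}$ whereas $\E[W^2]\sim\tfrac{\sigma^2h}{2}$, so the classical density constant $1/\sqrt3$ is replaced by $\sqrt{13/12}=\sqrt{13}/(2\sqrt3)$ — is routine once the factorization is in hand; the genuine obstacle is the uniform error bookkeeping for the Kac--Rice integrand in the neighborhoods of the degenerate points $0,\pi,2\pi$ and, for even $n$, the proof that the lone unpaired coefficient $a_n$ cannot disturb the leading-order count.
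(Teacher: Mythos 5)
Your factorization, the computation of the new constant via $\sum_{j=0}^{h-1}(j+h/2)^2=\tfrac{13}{12}h^3+\O(h^2)$, and the count of $h=n/2+\O(1)$ deterministic zeros of $\cos(hx/2)$ reproduce the paper's treatment of the odd case, and that half of your argument is essentially sound. (The paper controls the degenerate neighborhoods of $0,\pi,2\pi$ not by balancing $h\delta$ against $\log(1/\delta)$ but by a separate complex-analytic zero-counting lemma, Lemma \ref{Lem3.1}, which bounds the expected number of zeros on any interval of length $n^{-a}$ by $\O(n^{1-a})$ for arbitrary Gaussian coefficients; this also sidesteps the point $x=\pi$ with $h$ odd, where $A(\pi)=0$ and the Kac--Rice density is not defined, a spot your sketch waves at rather than handles.)

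The genuine gap is the even case, which is where the paper spends almost all of its effort. Your Rouch\'e/sign-change argument shows that each small neighborhood of a zero $x_k$ of $\cos(nx/4)$ contains an odd number, hence at least one, zero of $V_n$; but it gives no matching \emph{upper} bound on the expected count there, and since there are $\sim n/4$ such neighborhoods, an uncontrolled excess of even $\O(1)$ expected zeros per neighborhood would destroy the asymptotics. The alternative you mention in passing --- running Kac--Rice directly on $V_n$ and asserting that each such interval contributes $1+o(1)$ --- is the paper's actual route, but $1+o(1)$ is not good enough: summed over $n/4$ intervals it yields an error of $o(n)$, not the claimed $\O(n^{5/6})$. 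What is needed, and what the paper proves, is the quantitative statement $\E[N_n(\Omega'_k)]=1+\O(n^{-1/6})$ uniformly in $k$. This is delicate precisely because on these intervals $A(x)=n\cos^2(nx/4)+\cos^2(nx)+\cdots$ is rescued from degeneracy only by the $\cos^2(nx)$ term contributed by the lone unpaired coefficient $a_n$, while $\sqrt{A(x)C(x)-B^2(x)}$ stays of size $n^{3/2}$; the paper subdivides each $\Omega_k$ into four scales and evaluates the resulting integral $\int n^{3/2}\cos(nx)\left(n\cos^2(nx/4)+\cos^2(nx)\right)^{-1}dx=2\pi+\O(n^{-1/6})$ by dominated convergence together with an explicit arctangent estimate. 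None of this is carried out in your proposal, so the even case --- and with it the stated error term --- is not established.
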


In a similar way, the proof of the following theorem employs exactly the same machinery as that of Theorem \ref{Thm2.3} and is therefore omitted.
\currentpdfbookmark{Theorem 2.4}{Thm2.4}
\begin{theorem} \label{Thm2.4}
Let $ T_n(x) = \sum _{j=0} ^{n} a_{j} \cos (j x) + b_{j} \sin (j x)  \ncomma n \in \N ,$ and $ x \in \ztp .$
Assume $ A_0 \cup  B_0 \cup \tilde{A} \cup \tilde{B} $ is a family of i.i.d. random variables with Gaussian distribution $\mathcal{N}(0, \sigma^2) .$ If $ A_{0} = A_{1}$ and $ B_{0} = B_{1}$, then
\begin{align*} 
\Envtwo
& = \frac{ n }{ 2  } + \frac{\sqrt{13} }{ 2 \sqrt{3}} \, n + \O  \! \left(n^{4/5}\right) \asntoinfty. 
\end{align*}  
\end{theorem}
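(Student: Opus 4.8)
The plan is to adapt the argument for Theorem~\ref{Thm2.3}, carrying the sine terms along throughout. First I would write $p=\ceil{n/2}$, so that $n=2p-1$ or $n=2p$, and note that the hypotheses $A_0=A_1$, $B_0=B_1$ say that $a_j=a_{j+p}$ and $b_j=b_{j+p}$ for $j=0,\dots,p-1$. Applying to each of the $p$ pairs of equal coefficients the identities $\cos(jx)+\cos((j+p)x)=2\cos((j+\tfrac p2)x)\cos\tfrac{px}2$ and $\sin(jx)+\sin((j+p)x)=2\sin((j+\tfrac p2)x)\cos\tfrac{px}2$ gives
\[
T_n(x)=2\cos\tfrac{px}2\;T_n^{*}(x)+R_n(x)\com\qquad T_n^{*}(x):=\sum_{j=0}^{p-1}\Big(a_j\cos((j+\tfrac p2)x)+b_j\sin((j+\tfrac p2)x)\Big)\com
\]
where $R_n\equiv 0$ for $n$ odd and $R_n(x)=a_n\cos(nx)+b_n\sin(nx)$ for $n$ even (built from the single remaining pair $\tilde A\cup\tilde B=\{a_n,b_n\}$), and $T_n^{*}$, $R_n$ are independent. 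I would then treat the two parities separately.

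\emph{Odd $n$.} Here $T_n=2\cos(px/2)\,T_n^{*}$, so in $\ztp$ the zero set of $T_n$ is the $p$ simple deterministic zeros $x_k=(2k+1)\pi/p$, $k=0,\dots,p-1$, of $\cos(px/2)$ together with the zeros of $T_n^{*}$, and almost surely these two sets are disjoint; hence $N_n\ztp=p+N_n^{*}\ztp$ a.s. To count $N_n^{*}$ I would use Rice's formula, noting that since the $a_j,b_j$ are i.i.d.\ $\mathcal N(0,\sigma^2)$ the one-point statistics of $T_n^{*}$ are independent of $x$: $\E[T_n^{*}(x)^2]=\sigma^2p$, $\E[T_n^{*}(x)\,{T_n^{*}}'(x)]=0$, $\E[{T_n^{*}}'(x)^2]=\sigma^2\Sigma_p$ with $\Sigma_p:=\sum_{j=0}^{p-1}(j+\tfrac p2)^2=\tfrac{13}{12}p^3+\O(p^2)$. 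Thus the zero density is the constant $\pi^{-1}\sqrt{\Sigma_p/p}$, so $\Enwtwo=2\sqrt{\Sigma_p/p}=\sqrt{13/3}\,p+\O(1)$ and $\Envtwo=p+\sqrt{13/3}\,p+\O(1)=\tfrac n2+\tfrac{\sqrt{13}}{2\sqrt3}\,n+\O(1)$.

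\emph{Even $n$.} Now $R_n\not\equiv 0$, which destroys the clean factorization but keeps the one-point law of $T_n$ nondegenerate, so I would apply Rice's formula to $T_n$ itself. Using the independence of $T_n^{*}$ and $R_n$ and the moments above: $A(x):=\E[T_n(x)^2]=4\sigma^2p\cos^2(px/2)+\sigma^2=\sigma^2(2p(1+\cos px)+1)$, hence $B(x):=\E[T_n(x)T_n'(x)]=\tfrac12A'(x)=-p^2\sigma^2\sin(px)$, and differentiating the factorization $C(x):=\E[T_n'(x)^2]=\sigma^2(p^3\sin^2(px/2)+4\Sigma_p\cos^2(px/2)+n^2)$. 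A short computation then yields
\[
\Delta(x):=A(x)C(x)-B(x)^2=\tfrac{13}{3}\,\sigma^4p^4(1+\cos px)^2+\sigma^4p^3+(\text{lower order in }p)\com
\]
with $\Envtwo=\pi^{-1}\int_0^{2\pi}\sqrt{\Delta(x)}\,A(x)^{-1}\,dx$. I would split $\ztp$ into the $p$ arcs $I_k$ of radius $\rho$, $p^{-5/4}\ll\rho\ll p^{-1}$ to be optimised, centred at the zeros $x_k=(2k+1)\pi/p$ of $1+\cos px$, and the complement $J$. On $J$ one has $\Delta\sim\tfrac{13}{3}\sigma^4p^4(1+\cos px)^2$ and $A\sim 2\sigma^2p(1+\cos px)$, so the density is uniformly close to the constant $\tfrac{p}{2\pi}\sqrt{13/3}$ and $\int_J$ contributes $\sqrt{13/3}\,p$ up to error. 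On each $I_k$ the rescaling $x=x_k+w\,p^{-3/2}$ turns $1+\cos px$ into $\tfrac12 w^2/p+\cdots$, whence $A\to\sigma^2(1+w^2)$ and $\pi^{-1}\sqrt{\Delta}\,A^{-1}\,dx\to\pi^{-1}(1+w^2)^{-1}\,dw$; so $\int_{I_k}$ reduces to the Cauchy-type integral $\pi^{-1}\int_{\R}(1+w^2)^{-1}\,dw=1$ up to controlled error. Summing gives $\Envtwo=p+\sqrt{13/3}\,p+\O(n^{4/5})=\tfrac n2+\tfrac{\sqrt{13}}{2\sqrt3}\,n+\O(n^{4/5})$.

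The main obstacle will be the even case: one must check that $\Delta>0$ on all of $\ztp$ so that Rice's formula genuinely applies, and --- the delicate step --- show that the density on each arc $I_k$ integrates to $1+o(1)$ while the ``bulk'' part of that density matches the $\O$-term dropped on $J$, so that the $p$ extra zeros emerging near the $x_k$ are accounted for exactly; tuning the arc radius $\rho$ then produces the error $\O(n^{4/5})$. The odd case and all the covariance and summation identities are routine; in particular, including the sine terms makes the one-point statistics of $T_n^{*}$ constant in $x$, which streamlines the computation relative to the cosine-only setting of Theorem~\ref{Thm2.3}.
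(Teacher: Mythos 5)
Your proposal is correct and follows essentially the same route the paper intends for Theorem \ref{Thm2.4}, namely the machinery of Theorem \ref{Thm2.3}: factor out $\cos(px/2)$, treat the odd case via the deterministic zeros plus Kac--Rice for the reduced polynomial, and in the even case split $(0,2\pi)$ into a bulk (contributing $\tfrac{\sqrt{13}}{2\sqrt{3}}n$) and shrinking arcs around the zeros of $\cos(px/2)$ where a rescaled Cauchy-type integral yields the extra $n/2$ zeros. Your observation that the sine terms make $A$, $B$, $C$ exact (constant one-point statistics for $T_n^{*}$, hence an $\O(1)$ error in the odd case and no excluded neighborhoods of $0,\pi$) is precisely the simplification that lets the error improve to $\O\!\left(n^{4/5}\right)$.
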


\section{Proofs} \label{Sec4}
First, it is worth estimating the expected real zeros of a random cosine polynomial in any negligible interval of length $ \O \! \left(n^{-a}\right) $. The advantage of the following lemma is its validity for any random cosine (trigonometric) polynomial with coefficients that are normally distributed and not necessarily independent.
\subpdfbookmark{Lemma 3.1}{Lem3.1}
\begin{lemma} \label{Lem3.1}
Let $ V_n(x) = \sum _{j=0} ^{n} a_j \cos (j x) $ with the $a_j$ being random variables with Gaussian distribution $\mathcal{N}(0, \sigma^2) $. If $ a \in (0, 1/2 ) $ is fixed, then 
$
\E \left[ N_{n} \! \left(0 , n^{-a}\right) \right] 
= \O  (n^{1-a})  
$
as $ n \to \infty. $	
\end{lemma}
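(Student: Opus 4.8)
The plan is to bound $N_n(0,n^{-a})$ by the number of \emph{complex} zeros of the entire extension $\h(z)=\sum_{j=0}^{n}a_j\cos(jz)$ in a disc of radius comparable to $n^{-a}$, apply Jensen's formula, and take expectations; because the $a_j$ need not be independent, the quantity one must handle with care is $-\log\abs{V_n}$ evaluated at the centre of that disc, and I would deal with it by averaging the centre over a short interval and by a Mahler‑measure lower bound for the variance of $V_n$. Write $J=(0,n^{-a})$, $R=2n^{-a}$, $K=(n^{-a}/4,3n^{-a}/4)$, and note $J\subset\{z:\abs{z-z_0}<R/2\}$ for every $z_0\in K$. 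Since $a_n\neq 0$ a.s. we have $V_n\not\equiv 0$ a.s., so for a.e.\ realization $\h$ has only isolated zeros and $\h(z_0)\neq 0$ for a.e.\ $z_0\in K$; for such $z_0$, Jensen's formula on the disc of radius $R$ about $z_0$ gives
\[
N_n(J)\le\#\{z:\h(z)=0,\ \abs{z-z_0}<R/2\}\le\frac{1}{\log 2}\Bigl(\log\max_{\abs{z-z_0}=R}\abs{\h(z)}-\log\abs{\h(z_0)}\Bigr).
\]
I would average this over $z_0\in K$ (the integrand is nonnegative, so Tonelli applies when passing to $\E$) and estimate the two resulting terms separately.

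For the first term, the elementary inequality $\abs{\cos(jz)}\le e^{j\abs{\mathrm{Im}\,z}}$ gives $\abs{\h(z)}\le e^{2n^{1-a}}\sum_{j=0}^{n}\abs{a_j}$ whenever $\abs{\mathrm{Im}\,z}\le R$, hence $\log\max_{\abs{z-z_0}=R}\abs{\h(z)}\le 2n^{1-a}+\log^{+}\sum_{j}\abs{a_j}$ uniformly in $z_0\in K$; since $\E\sum_j\abs{a_j}=\O(n)$, Jensen's inequality for $\log$ makes the expectation of the right side equal to $2n^{1-a}+\O(\log n)=\O(n^{1-a})$. For the second term, fix $t\in K$: then $\h(t)=V_n(t)$ is a centred Gaussian of variance $v(t)=\sigma^{2}\sum_{j,k=0}^{n}\rho_{jk}\cos(jt)\cos(kt)$ with $\rho_{jk}=\sigma^{-2}\E[a_ja_k]$, and writing $V_n(t)=\sqrt{v(t)}\,W$ with $W\sim\mathcal{N}(0,1)$ one checks $-\log\abs{V_n(t)}\le\log^{-}\abs{V_n(t)}\le\tfrac12\log^{+}(1/v(t))+\log^{-}\abs{W}$, so that $\E[-\log\abs{V_n(t)}]\le\tfrac12\log^{+}(1/v(t))+C_0$ for a.e.\ $t$, where $C_0=\E[\log^{-}\abs{W}]<\infty$.

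The step I expect to carry the argument is the following uniform lower bound on $\int_0^{2\pi}\log v$. Here $v$ is a nonnegative trigonometric polynomial of degree $2n$ whose coefficient of $\cos(2nt)$ equals $\sigma^{2}/2$ (only the term $j=k=n$ contributes, with $\rho_{nn}=1$); via $w=e^{it}$ it corresponds to a palindromic algebraic polynomial of degree $4n$ with leading coefficient $\sigma^{2}/2$, so Jensen's formula gives $\frac{1}{2\pi}\int_{0}^{2\pi}\log v(t)\,dt\ge\log(\sigma^{2}/4)$, and together with the crude bound $v(t)\le\sigma^{2}(n+1)^{2}$ (from $\abs{\rho_{jk}}\le 1$) this yields $\int_{0}^{2\pi}\log^{-}v(t)\,dt=\O(\log n)$. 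Consequently the average of $\E[-\log\abs{\h(z_0)}]$ over $z_0\in K$ is at most $C_0+\frac{1}{2\abs{K}}\int_{0}^{2\pi}\log^{-}v=\O(1)+\O(n^{a}\log n)$.

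Combining the two estimates,
\[
\E\!\left[N_n\!\left(0,n^{-a}\right)\right]\le\frac{1}{\log 2}\bigl(2n^{1-a}+\O(n^{a}\log n)+\O(\log n)\bigr)=\O(n^{1-a}),
\]
the last step because $a<1/2$ forces $n^{a}\log n=o(n^{1-a})$, and the implied constant depends only on $\sigma$, not on the joint law of the $a_j$. Thus the only genuinely nontrivial ingredients are the uniform bound $\frac{1}{2\pi}\int_0^{2\pi}\log\mathrm{Var}(V_n(t))\,dt\ge\log(\sigma^2/4)$ — which is what rescues the estimate even though the coefficients may be heavily dependent and $V_n(t)$ may degenerate on a measure‑zero set of $t$'s — together with the device of averaging the Jensen centre over $K$; everything else is routine.
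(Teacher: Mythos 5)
Your argument is correct, but it takes a genuinely different route from the paper's. The paper converts $V_n$ into the algebraic polynomial $P_{2n}(z)=\sum_{k=-n}^{n}c_kz^{k+n}$ via $V_n(x)=e^{-inx}P_{2n}(e^{ix})$, checks that the $\E[\abs{c_k}]$ are bounded and that $\E[\log\abs{c_0}]$ and $\E[\log\abs{c_n}]$ are bounded below, and then quotes the annular-sector discrepancy estimate of Pritsker and Yeager \cite{PY} to get $\E[N_n(0,n^{-a})]=\O(n^{1-a})+\O(\sqrt{n\log n})$ at once. You instead give a self-contained Jensen-formula argument: dominate $N_n(0,n^{-a})$ by the zeros of the entire extension in a disc, average the centre over a short interval to tame $-\log\abs{V_n(z_0)}$, and control $\int_0^{2\pi}\log^-v$ by the Mahler-measure observation that the $\cos(2nt)$-coefficient of the variance $v(t)$ equals $\sigma^2/2$ regardless of the correlation structure --- a nice device, and the heart of your proof. (Both arguments hinge on the same threshold: you need $n^{a}\log n=o(n^{1-a})$, the paper needs $\sqrt{n\log n}=\O(n^{1-a})$, and each forces $a<1/2$.) What each buys: your route avoids any external discrepancy theorem; the paper's route is more general, since the \cite{PY} criterion uses only marginal conditions on individual coefficients, whereas your factorization $V_n(t)=\sqrt{v(t)}\,W$ with $W\sim\mathcal{N}(0,1)$ requires the coefficient vector to be \emph{jointly} Gaussian --- marginal normality, which is all the lemma literally assumes, does not make the linear combination $V_n(t)$ Gaussian. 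Every coefficient family in this paper is jointly Gaussian, so this costs nothing here, but it should be stated as a hypothesis. Two trivial slips: the leading coefficient of the degree-$4n$ palindromic polynomial attached to $v$ is $\sigma^2/4$, not $\sigma^2/2$ (the bound $\log(\sigma^2/4)$ you then use is the correct one), and when splitting $\E[\log\max\abs{\h}-\log\abs{\h(z_0)}]$ you should pass to $\E[\log^+\max\abs{\h}]+\E[\log^-\abs{\h(z_0)}]$ as you implicitly do, since the two signed expectations need not separately exist a priori.
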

\begin{proof}
For simplicity, we let $\sigma=1 .$ We note that 
$ V_n $ may be written as 
$ 
V_n (x) = e^ { -in x} P_{2n} ( e^{i x} ) ,  
$ 
where 
$ 
P_{2n} ( z ) := \sum_{ k = -n }^{  n } c_ { k } z ^{k+n}
$ 
with $ c_0 = a _0  $ and $ c_k = c_{-k} = a_{k} / 2 \ncomma k = 1, \ldots , n $. 
The fact that $ c_0  $ and the $ 2 c_k \ncomma k = \pm 1 , \ldots , \pm n $, are random variables with standard normal distribution suggests that 
\begin{align*}
& \E [  \abs{c_0} ] 
= \E [  \abs{2c_k} ] 
= \sqrt{ 2/\pi }
& & \text{and}  
& \E [ \log  \abs{c_0} ] 
= \E [ \log   \abs{2c_n} ] 
=   - (\gamma + \log{2})/2 \com 
\end{align*}
where
$ \gamma $ is the Euler-Mascheroni constant. So that
\begin{align*}
& M  := \sup \{ \E [ \abs{c_k} ] : k = 0 , \pm 1 , \ldots ,  \pm n \}  <  \infty
& & \text{and}  
& L := \inf  \{ \E [ \log \abs{c_k} ] : k = 0 \ \text{and} \ n \}  > - \infty . 
\end{align*}
For any complex random polynomial $ P_n $ of degree $ n $, we define the zero counting measure 
$ 
\tau _n := \frac{1}{n} \sum_{ k = 1 }^{n} \delta_{ z_k} ,
$ 
where $ \{z_k \}_{ k =0 }^{ n } $ are the zeros of $  P_n  $, and $ \delta_{ z_k} $ is the unit point mass at $ z_k $.
For any $ 0 < r <1 $, we also define the following annular sector
$
A_r ( \alpha , \beta ) = \{ z \in \mathbb{C} : r < | z | < 1/r , \ \alpha < \arg z < \beta \}.
$  
Set $ \alpha = 0, \ \beta = n^{-a}, \  r = 1/2 ,$ 
and let $ N_n (\, \cdot \,) $ and $ N_{2n}^{*} (\, \cdot \,)$ denote the number of zeros of $ V_n $ and $ P_{2n} $ respectively. It follows from Corollary 2.2. of \cite{PY} that 
\begin{equation*}
\E \left[ \,\abs{\tau_{ 2n }  \left( A_{ 1/2 } ( 0, n^{-a} ) \right) -  \frac{n^{-a} }{2 \pi}} \, \right] 
= \O  \! \left( \sqrt{  \frac{\log 2n}{ 2n}} \, \right) \quad\mbox{as }n\to\infty.
\end{equation*} 
So 
\begin{align*} 
\E [  N_{n}  ( 0 , n^{-a} )   ] 
& \leqslant \E \left[  N_{2n}^{*}  \left( A_{ 1/2 } ( 0 , n^{-a} ) \right)   \right] 
=  \O  \! \left(\sqrt{ n \log n}\right)  +  \O \! \left(n^{1-a}\right) = \O \! \left(n^{1-a}\right) \asntoinfty \com
\end{align*}
as required.
Likewise, 
$
\E [ N_{n} ( F ) ] = \O  \!  \left(n^ { 1-a }\right) 
$
holds for any interval $ F $ of length $ \O \! \left(n^{-a}\right) $ as $ n $ grows to infinity.
\end{proof}
An interested reader may need the following result while proving Theorems \ref{Thm2.2} and \ref{Thm2.4}.
\begin{proposition}
Let $ T_n(x) = \sum _{j=0} ^{n} a_{j} \cos (j x) + b_{j} \sin (j x) $ with the $ a_j $ and $ b_j $ being random variables with Gaussian distribution $\mathcal{N}(0, \sigma^2) $. If $ a \in (0, 1/2 ) $ is fixed, then 
$
\E \left[ N_{n} \! \left(0 , n^{-a}\right) \right] 
= \O  (n^{1-a})  
$
as $ n \to \infty. $
\end{proposition}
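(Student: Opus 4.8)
The plan is to run the argument of Lemma \ref{Lem3.1} almost verbatim, the only change being the random algebraic polynomial one attaches to $T_n$. Using $\cos(jx)=(e^{ijx}+e^{-ijx})/2$ and $\sin(jx)=(e^{ijx}-e^{-ijx})/(2i)$, one writes
\[
T_n(x) = e^{-inx}\, Q_{2n}\!\left(e^{ix}\right) \com \qquad Q_{2n}(z) := \sum_{k=-n}^{n} c_k\, z^{k+n} \com
\]
where $c_0 = a_0$, and $c_k = (a_k - i b_k)/2$, $c_{-k} = (a_k + i b_k)/2$ for $k = 1, \ldots, n$. Thus $Q_{2n}$ is a complex random polynomial; its leading coefficient $c_n$ and its constant term $c_{-n}$ satisfy $\abs{c_n} = \abs{c_{-n}} = \tfrac{1}{2}\sqrt{a_n^2 + b_n^2}$, which vanishes only on the null event $\{a_n = b_n = 0\}$, so $Q_{2n}$ has degree exactly $2n$ and $Q_{2n}(0) \neq 0$ almost surely.

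Next I would check the two integrability hypotheses needed to apply Corollary 2.2 of \cite{PY}, in the exact role they play in the proof of Lemma \ref{Lem3.1}. Since $\abs{c_k} \leqslant (\abs{a_k} + \abs{b_k})/2$ and the $a_j, b_j$ are $\mathcal{N}(0, \sigma^2)$, we get $\E[\abs{c_k}] \leqslant \sigma \sqrt{2/\pi}$ for every $k$, so $M := \sup\{ \E[\abs{c_k}] : k = 0, \pm 1, \ldots, \pm n \} < \infty$. For the log-moment, $\E[\log\abs{c_0}] = \E[\log\abs{a_0}]$ is finite, while $\abs{c_n} = \abs{c_{-n}} \geqslant \tfrac{1}{2}\abs{a_n}$ gives $\E[\log\abs{c_n}] \geqslant \E[\log\abs{a_n}] - \log 2 > -\infty$ (and $\E[\log\abs{c_n}] \leqslant \E[\abs{c_n}] < \infty$), whence $L := \inf\{ \E[\log\abs{c_k}] : k = 0, \pm n \} > -\infty$.

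With $M$ and $L$ in hand, take $\alpha = 0$, $\beta = n^{-a}$, $r = 1/2$, let $\tau_{2n}$ be the zero-counting measure of $Q_{2n}$ and $A_{1/2}(0, n^{-a})$ the annular sector as in Lemma \ref{Lem3.1}, and let $N^{*}_{2n}(\,\cdot\,)$ count the zeros of $Q_{2n}$. Corollary 2.2 of \cite{PY} then gives
\[
\E\!\left[\, \abs{ \tau_{2n}\!\left( A_{1/2}(0, n^{-a}) \right) - \frac{n^{-a}}{2\pi} } \,\right] = \O\!\left( \sqrt{\frac{\log 2n}{2n}} \,\right) \asntoinfty.
\]
Each real zero of $T_n$ in $(0, n^{-a})$ is a zero of $Q_{2n}$ on the arc $\{ e^{ix} : 0 < x < n^{-a} \} \subset A_{1/2}(0, n^{-a})$, so $N_n(0, n^{-a}) \leqslant N^{*}_{2n}(A_{1/2}(0, n^{-a})) = 2n\, \tau_{2n}(A_{1/2}(0, n^{-a}))$, and therefore
\[
\E\!\left[ N_n\!\left( 0, n^{-a} \right) \right] \leqslant 2n\!\left( \frac{n^{-a}}{2\pi} + \O\!\left( \sqrt{\frac{\log 2n}{2n}} \right) \right) = \O\!\left( n^{1-a} \right) + \O\!\left( \sqrt{n \log n} \right) = \O\!\left( n^{1-a} \right) \asntoinfty,
\]
the last step because $a < 1/2$. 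Repeating the argument with the arc $\{ e^{ix} : x_0 < x < x_0 + n^{-a} \}$ — equivalently, replacing $Q_{2n}(z)$ by $Q_{2n}(e^{ix_0} z)$, which does not affect the bounds on $M$ and $L$ — yields $\E[N_n(F)] = \O(n^{1-a})$ for every interval $F$ of length $\O(n^{-a})$.

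Since the scheme is a transcription of the proof of Lemma \ref{Lem3.1}, no essential new difficulty appears; the one point deserving attention is that the coefficients of the companion polynomial $Q_{2n}$ are now complex, with leading coefficient $c_n = (a_n - i b_n)/2$ formed from two Gaussians whose joint law is not specified. Hence the finiteness of $M$ and of $L$ must be re-established rather than quoted, and it is precisely the elementary bound $\abs{c_n} \geqslant \tfrac{1}{2}\abs{a_n}$ that makes the required log-moment estimate hold irrespective of the dependence between $a_n$ and $b_n$.
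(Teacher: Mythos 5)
Your proposal is correct and follows essentially the same route as the paper: the same factorization $T_n(x)=e^{-inx}Q_{2n}(e^{ix})$ with $c_k=(a_k-ib_k)/2$, the same verification that $M<\infty$ and $L>-\infty$ (the paper bounds $\E[\log\abs{2c_n}]$ below by $\E[\log\max\{\abs{a_n},\abs{b_n}\}]$ where you use $\abs{c_n}\geqslant\tfrac12\abs{a_n}$, an equivalent and equally dependence-free estimate), followed by the identical application of Corollary 2.2 of \cite{PY} on the annular sector. No discrepancy worth noting.
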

\begin{proof}
We note that 
$ 
T_n (x) = e^ { -in x} P_{2n} ( e^{i x} ) ,  
$ 
where 
$ 
P_{2n} ( z ) := \sum_{ k = -n }^{  n } c_ { k } z ^{k+n}
$ 
with $ c_0 = a _0  $ and $ c_k = \overline{c_{-k}} = (a_{k}-ib_{k}) / 2 \ncomma k = 1, \ldots , n .$
It is also clear that $ \E [  \abs{2c_k} ] = \E [  \abs{2c_{-k}} ]
\leqslant \E [ \abs{a_k} ] + \E [ \abs{b_k} ] $ and
$ \E [\log \abs{2c_n}] \geqslant \E [\log \, \max \left\{\abs{a_n},\abs{b_n}\right\} ] $
which imply that $ M <\infty $ and $ L > -\infty. $
The rest of the proof is similar to that of Lemma \ref{Lem3.1}.
\end{proof}
Before proving the first theorem, we need another lemma:
\begin{lemma}  \label{Lem4.1}
Fix $ a \in (0,1/2) $ and $ p \in \N ,$ 
and let 
$ 
x \in [ m^{-a}, \pi / p -m^{-a} ] \ncomma m \in \N.
$
For $ \lambda = 0 , 1 , 2 ,$ we define 
\begin{align*}
& P_{ \lambda } (p,m,x) 
:= \sum _{j=0} ^{m-1} j^{ \lambda }\cos(2p j)x 
& & \text{and}  
& Q_{ \lambda } (p,m,x) := \sum _{j=0} ^{m-1} j^ { \lambda } \sin(2p j)x \com
\end{align*}
then 
\begin{align*}
& P_{ \lambda } (p,m,x) = \O  \! \left(  m^ { \lambda+a} \right)
& & \text{and}  
& Q_{ \lambda } (p,m,x) = \O  \! \left(  m^ { \lambda+a} \right) \quad\mbox{as } m \to\infty .
\end{align*}
\end{lemma}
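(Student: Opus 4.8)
The plan is to treat $P_\lambda$ and $Q_\lambda$ at once by passing to the complex exponential sum
\[
R_\lambda(p,m,x) \;:=\; \sum_{j=0}^{m-1} j^{\lambda}\, e^{2ipjx},
\]
whose real and imaginary parts are exactly $P_\lambda(p,m,x)$ and $Q_\lambda(p,m,x)$; it therefore suffices to bound $\abs{R_\lambda}$. Set $z = e^{2ipx}$. Since $x \in [m^{-a},\pi/p - m^{-a}]$ we have, for $m$ large, $px \in [pm^{-a}, \pi - pm^{-a}] \subset (0,\pi)$, so $z \neq 1$ and the partial sums of the geometric series,
\[
B_k \;:=\; \sum_{j=0}^{k} z^{j} \;=\; \frac{z^{k+1}-1}{z-1},
\]
satisfy $\abs{B_k} \leqslant 2/\abs{z-1} = 1/\abs{\sin(px)}$ uniformly in $k$.

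The one point that needs a little care is a lower bound for $\abs{\sin(px)}$ that is uniform over the admissible range of $x$. Since $\sin$ is positive and concave on $(0,\pi)$, its minimum on $[pm^{-a},\pi - pm^{-a}]$ is attained at an endpoint, and by the symmetry $\sin(\pi - t) = \sin t$ this minimum equals $\sin(pm^{-a})$. Once $m$ is large enough that $pm^{-a} \leqslant \pi/2$, Jordan's inequality gives $\sin(pm^{-a}) \geqslant (2/\pi)\,pm^{-a}$, whence $\abs{B_k} \leqslant (\pi/2p)\, m^{a}$ uniformly in $k$ and in $x$; that is, the partial sums of $\sum z^j$ are $\O(m^{a})$.

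The remaining step converts this $\O(m^{a})$ control of the $B_k$ into the claimed bound on $R_\lambda$ by summation by parts. Writing $z^{j} = B_j - B_{j-1}$ with $B_{-1}=0$ and rearranging gives
\[
R_\lambda \;=\; (m-1)^{\lambda} B_{m-1} \;-\; \sum_{j=0}^{m-2}\bigl[(j+1)^{\lambda}-j^{\lambda}\bigr] B_j.
\]
Because $j\mapsto j^{\lambda}$ is nonnegative and nondecreasing for $\lambda = 0,1,2$, the increments $(j+1)^{\lambda}-j^{\lambda}$ are nonnegative and telescope to $(m-1)^{\lambda}$, so
\[
\abs{R_\lambda} \;\leqslant\; \Bigl(\max_{0\leqslant k\leqslant m-1}\abs{B_k}\Bigr)\Bigl((m-1)^{\lambda} + \sum_{j=0}^{m-2}\bigl[(j+1)^{\lambda}-j^{\lambda}\bigr]\Bigr) \;\leqslant\; 2(m-1)^{\lambda}\max_{k}\abs{B_k} \;=\; \O\!\left(m^{\lambda+a}\right).
\]
Taking real and imaginary parts yields $P_\lambda = \O(m^{\lambda+a})$ and $Q_\lambda = \O(m^{\lambda+a})$ as $m\to\infty$, uniformly for $x$ in the stated interval. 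This argument is self-contained and uses no earlier result; there is no serious obstacle, the only subtlety being the uniform-in-$x$ lower bound on $\abs{\sin(px)}$, where one must observe that the worst case is the two endpoints and that $pm^{-a}\to 0$ permits replacing $\sin$ by its linear lower bound.
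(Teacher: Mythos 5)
Your argument is correct, and it is a genuinely different route from the paper's. The paper does not give a self-contained proof of Lemma \ref{Lem4.1} at all: it simply points to the estimates (2.1)--(2.6) of Farahmand and Li \cite{FL}, which rest on explicit closed-form expressions for these sums (Lagrange-type identities such as $\sum_{j=0}^{m-1}\cos(2j+p)x = \cos(m-1+p)x\,\sin(mx)/\sin(x)$, differentiated in $x$ to produce the $j$- and $j^2$-weighted versions) followed by the same kind of lower bound on $\sin$ near the endpoints. Your Abel-summation argument reaches the identical conclusion without any closed-form identities: the geometric partial sums $B_k$ are bounded by $1/\abs{\sin(px)} = \O(m^{a})$ uniformly, and since $j\mapsto j^{\lambda}$ is monotone its increments telescope, so summation by parts costs only a factor $2(m-1)^{\lambda}$. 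Each step checks out: $\abs{z-1}=2\abs{\sin(px)}$ gives the stated bound on $B_k$; concavity of $\sin$ on $(0,\pi)$ correctly locates the minimum at the endpoints; Jordan's inequality applies once $pm^{-a}\leqslant \pi/2$; and the partial-summation identity with $B_{-1}=0$ is right (for $\lambda=0$ the increment sum vanishes and $R_0=B_{m-1}$ directly, which is still covered by your final bound). What your approach buys is self-containedness and transparency — it handles all three values of $\lambda$, and indeed any monotone weight, in one stroke and makes the uniformity in $x$ explicit — whereas the paper's citation-based route leans on computations the reader must retrieve from \cite{FL} and adapt by the substitution $\theta \mapsto px$, $\eps = m^{-a}$.
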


\begin{proof} 
The proof is similar to those of (2.1)-(2.6) of \cite[pp. 1877-1879]{FL} and by replacing $ \theta $ with $ p x $ and setting $ \eps = m^ {-a}. $
\end{proof}

\begin{corollary} \label{Cor4.1.1}
With the assumptions above, let 
\begin{align*}
& R_{ \lambda } (p,m,x) 
:= \sum _{j=0} ^{m-1} j^{ \lambda }\cos(2p j+1)x 
& & \text{and}  
& S_{ \lambda } (p,m,x) 
:= \sum _{j=0} ^{m-1} j^{ \lambda }\sin(2p j+1)x \com
\end{align*}
then by expanding $ \cos(2p j+1)x $ and $ \sin(2p j+1)jx $ along with the preceding lemma we have
\begin{align*} 
& R_{ \lambda } (p,m,x) 
= \O  \! \left(  m^ { \lambda+a} \right)
& & \text{and}  
& S_{ \lambda } (p,m,x) = \O  \! \left(  m^ { \lambda+a} \right) \quad\mbox{as } m \to\infty .
\end{align*}
\end{corollary}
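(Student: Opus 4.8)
The plan is to reduce $R_{\lambda}$ and $S_{\lambda}$ directly to the quantities $P_{\lambda}$ and $Q_{\lambda}$ that Lemma \ref{Lem4.1} already controls, so that no new oscillatory-sum estimate is required. First I would invoke the angle-addition identities
\begin{align*}
\cos(2pj+1)x &= \cos x \, \cos(2pj)x - \sin x \, \sin(2pj)x, \\
\sin(2pj+1)x &= \cos x \, \sin(2pj)x + \sin x \, \cos(2pj)x,
\end{align*}
and sum each against $j^{\lambda}$ for $j=0,\ldots,m-1$. Since $\cos x$ and $\sin x$ are independent of $j$, they factor out of the sums, giving the exact identities
\begin{align*}
R_{\lambda}(p,m,x) &= \cos x \, P_{\lambda}(p,m,x) - \sin x \, Q_{\lambda}(p,m,x), \\
S_{\lambda}(p,m,x) &= \cos x \, Q_{\lambda}(p,m,x) + \sin x \, P_{\lambda}(p,m,x).
\end{align*}

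Next, because $\abs{\cos x} \leqslant 1$ and $\abs{\sin x} \leqslant 1$ for every $x$, the triangle inequality yields $\abs{R_{\lambda}(p,m,x)} \leqslant \abs{P_{\lambda}(p,m,x)} + \abs{Q_{\lambda}(p,m,x)}$, and likewise for $\abs{S_{\lambda}(p,m,x)}$. The hypotheses carried into the corollary are exactly those of Lemma \ref{Lem4.1} — in particular $x \in [m^{-a}, \pi/p - m^{-a}]$ with $a \in (0,1/2)$ and $p \in \N$ fixed — so that lemma applies verbatim and gives $P_{\lambda}(p,m,x) = \O(m^{\lambda+a})$ and $Q_{\lambda}(p,m,x) = \O(m^{\lambda+a})$ as $m \to \infty$, with implied constants uniform in $x$ on the stated interval. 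Combining these two observations produces $R_{\lambda}(p,m,x) = \O(m^{\lambda+a})$ and $S_{\lambda}(p,m,x) = \O(m^{\lambda+a})$, as claimed; the only effect of the two-term expansion is a harmless factor of $2$ absorbed into the implied constant.

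There is essentially no obstacle here: the corollary is a one-line consequence of Lemma \ref{Lem4.1} once the shift by $1$ inside the argument is absorbed by the addition formulas. The only point worth a moment's care is that the admissible range of $x$ and the role of the parameter $p$ pass through the expansion unchanged — which they do, since the factored cosine and sine carry no dependence on $j$ — so nothing beyond Lemma \ref{Lem4.1} is needed.
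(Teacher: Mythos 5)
Your proposal is correct and matches the paper's own (very terse) argument: the paper likewise proves the corollary by expanding $\cos(2pj+1)x$ and $\sin(2pj+1)x$ via the angle-addition formulas, factoring out $\cos x$ and $\sin x$, and applying Lemma \ref{Lem4.1} to the resulting $P_{\lambda}$ and $Q_{\lambda}$ sums. Nothing is missing.
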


Proving the desired theorems requires implementing the well-known Kac-Rice Formula established by Kac \cite{Kac2}, however we need a more generalized version of the formula, as in the work of Lubinsky, Pritsker and Xie \cite{LPX}.

\begin{proposition}[Kac-Rice Formula] \label{Prop4.1.1}
Let $ [a, b] \subset \R ,$ and consider real-valued functions $ f_{j}(x) \in C^1 [(a, b)] $,  $j = 0, \ldots , n $. Define the random function
$ 	F_n (x) := \sum _{j=0} ^{n} a_{j} f_{j}(x),  $
where the coefficients $a_j$ are i.i.d. random variables with Gaussian distribution $ \mathcal{N}(0,\sigma^{2}) $ and 
\begin{align*}
& A(x) := \sum _{j=0} ^{n} \left(f_{j} (x)\right)^2,
& B(x) := \sum _{j=0} ^{n} f_j (x) f'_{j} (x) \com 
& & \text{and}  &
& C(x) := \sum _{j=0} ^{n} \left(f'_{j} (x)\right)^2.
\end{align*}
If $ A(x)>0 $ on $ (a,b) ,$ and there is $ M \in \N $ such that $ F'_{n}(x) $
has at most $ M $ zeros in $ (a, b) $ for all choices of
coefficients, then the expected number of real zeros of $ F_n(x) $ in the interval $ (a, b) $ is given by
\begin{equation*}  
\E [ N_{ n }  ( a , b ) ] 
= \frac{1}{\pi} \displaystyle \int_{a}^{b}  \dfrac{ \sqrt{A(x)C(x)-B^2(x)} }{A(x)} \, dx.
\end{equation*}
\end{proposition}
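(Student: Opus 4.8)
The plan is to run Kac's original argument in the Gaussian setting, in the form later codified by Rice: represent the zero count as an $\eps\to 0$ limit of an integral, push the limit and the expectation through, and finish with an elementary computation for a bivariate Gaussian vector.

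First I would record two consequences of the hypotheses. Since $F_n$ is $C^1$, Rolle's theorem forces a zero of $F_n'$ strictly between any two distinct zeros of $F_n$; hence the number of distinct zeros of $F_n$ in $(a,b)$ is at most $M+1$ for \emph{every} choice of coefficients, and $N_n(a,b)$ is a bounded random variable. Moreover, for fixed $x\in(a,b)$ the event $\{F_n(x)=F_n'(x)=0\}$ says that the Gaussian vector $(a_0,\ldots,a_n)$ is orthogonal to both $(f_0(x),\ldots,f_n(x))$ and $(f_0'(x),\ldots,f_n'(x))$, hence lies in a subspace of codimension $2$ and so has probability $0$ whenever these two vectors are linearly independent, i.e.\ whenever $A(x)C(x)>B^2(x)$. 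Covering $(a,b)$ by countably many short intervals and using $A(x)>0$, one upgrades this to: almost surely every zero of $F_n$ in $(a,b)$ is simple and $F_n(a)F_n(b)\neq 0$. On this full-measure event the substitution $u=F_n(x)$ on each maximal interval of monotonicity gives the pointwise identity
\[
N_n(a,b)=\lim_{\eps\to 0^+}\frac{1}{2\eps}\int_a^b \mathbf{1}_{\{\,|F_n(x)|<\eps\,\}}\,|F_n'(x)|\,dx .
\]

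Next I would take expectations. Interchanging $\E$ with $\int_a^b$ is Fubini for the nonnegative integrand, and interchanging the $\eps$-limit with $\int_a^b\E[\,\cdot\,]\,dx$ is dominated convergence: the uniform bound $N_n(a,b)\le M+1$ together with a crude estimate of the $\eps$-averaged integrand on compact subsets of $(a,b)$ (using $A(x)>0$, so that the density of $F_n(x)$ is locally bounded) supplies the dominating function. This reduces the claim to computing, for each fixed $x\in(a,b)$,
\[
\lim_{\eps\to 0^+}\frac{1}{2\eps}\,\E\!\left[\mathbf{1}_{\{\,|F_n(x)|<\eps\,\}}\,|F_n'(x)|\right]=p_{F_n(x)}(0)\cdot\E\!\left[\,|F_n'(x)|\ \big|\ F_n(x)=0\right],
\]
where $p_{F_n(x)}$ denotes the density of $F_n(x)$. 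Now $(F_n(x),F_n'(x))$ is centered Gaussian with $\mathrm{Var}(F_n(x))=\sigma^2A(x)$, $\mathrm{Cov}(F_n(x),F_n'(x))=\sigma^2B(x)$, $\mathrm{Var}(F_n'(x))=\sigma^2C(x)$, and $A(x)C(x)-B^2(x)\ge 0$ by Cauchy--Schwarz (equality at most on a negligible set, which does not affect the integral). Since $A(x)>0$ we get $p_{F_n(x)}(0)=(2\pi\sigma^2A(x))^{-1/2}$, and conditionally on $F_n(x)=0$ the derivative $F_n'(x)$ is centered Gaussian with variance $\sigma^2\bigl(A(x)C(x)-B^2(x)\bigr)/A(x)$, so its conditional absolute first moment is $\sqrt{2/\pi}\,\sigma\sqrt{(A(x)C(x)-B^2(x))/A(x)}$. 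Multiplying the two factors, the $\sigma$'s cancel and the product collapses to $\tfrac1\pi\sqrt{A(x)C(x)-B^2(x)}/A(x)$; integrating over $(a,b)$ gives the asserted formula.

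The only genuinely delicate point is the passage to the limit under the integral sign, that is, producing a dominating function that is uniform near points where $A(x)$ is small or $|F_n'(x)|$ is large. This is precisely where the two hypotheses of the proposition are used: $A(x)>0$ on $(a,b)$ keeps the density $p_{F_n(x)}$ controlled on compacta, and the a priori bound $M$ on the number of zeros of $F_n'$ both bounds $N_n(a,b)$ uniformly and, via Rolle, rules out the pathologies that would break the change-of-variables identity. Everything else is routine Gaussian algebra.
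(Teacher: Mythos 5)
A point of reference first: the paper does not actually prove this proposition --- it is imported from the cited work of Lubinsky, Pritsker and Xie \cite{LPX} --- so your argument can only be measured against the standard derivation there, which it follows in outline: Kac's counting integral, interchange of limit and expectation justified by the a priori bound $M$ (via Rolle), and the bivariate Gaussian computation. The Gaussian algebra at the end is correct, including the cancellation of $\sigma$, and the conditional variance $\sigma^2\bigl(A(x)C(x)-B^2(x)\bigr)/A(x)$ is right.

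The genuine gap is the sentence ``Covering $(a,b)$ by countably many short intervals and using $A(x)>0$, one upgrades this to: almost surely every zero of $F_n$ in $(a,b)$ is simple.'' No such upgrade follows from a countable cover: from $\mathbb{P}\bigl(F_n(x)=F_n'(x)=0\bigr)=0$ for each fixed $x$, Fubini gives only that the random set $\{x:F_n(x)=F_n'(x)=0\}$ has Lebesgue measure zero almost surely, which does not exclude isolated degenerate zeros --- an uncountable union of null events need not be null. Worse, your own criterion for the pointwise statement fails exactly where $A(x)C(x)=B^2(x)$: there $(f_j(x))_j$ and $(f_j'(x))_j$ are proportional, so $F_n'(x)=\bigl(B(x)/A(x)\bigr)F_n(x)$ identically in the coefficients, and any zero of $F_n$ located at such a point is automatically a double zero; remarking that this set is negligible for the value of the integrand does not address its effect on the zero set. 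Since almost-sure simplicity is what you use to establish the pointwise identity $N_n(a,b)=\lim_{\eps\to0^+}\frac{1}{2\eps}\int_a^b\mathbf{1}_{\{|F_n(x)|<\eps\}}|F_n'(x)|\,dx$, the proof as written is incomplete at its one non-routine step. The standard repairs are either a Bulinskaya-type small-ball estimate, $\mathbb{P}\bigl(\exists\, x\in I:\ |F_n(x)|<\delta \text{ and } |F_n'(x)|<\delta\bigr)=o(|I|)$ summed over a fine partition, or --- as in Kac's original argument and in \cite{LPX} --- to avoid simplicity altogether: compute $\E[\#\{x:F_n(x)=u\}]$ for the level-$u$ crossings, note that degenerate configurations are avoided for almost every level $u$, and pass to $u\to0$ using the uniform bound $M+1$ and continuity in $u$. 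With either repair, the rest of your argument (domination on compact subintervals, exhaustion of $(a,b)$, and the Gaussian computation) goes through.
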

\currentpdfbookmark{Proof of Theorem 2.1}{PThm2.1}
\begin{proof}[Proof of Theorem \ref{Thm2.1}] 
Fix $ a \in (0,1/5), $ and define $ \tilde{J} = \{ j : a_j \in \tilde{A}\} .$ 
For $ x \in \ztp $,
we see that
\begin{align*}
V_n(x)
& = \sum _{j=0} ^{n} a_j \cos (j x) 
= \sum_{j=0}^{m-1} \sum_{k=0}^{\l -1} a_{2\l j +k}  \left(\cos (2\l j +k) x + \cos (2\l j + \l +k ) x \right) 
+ \sum_{j \in \tilde{J}} a_j \cos (j x) \com 
\end{align*}
where 
$ 
\sum_{j \in \tilde{J}} a_j \cos (j x) = 0
$
if $ r=-1. $
Let $ x \in E = [ 0, \pi / \l ] \setminus F $ with $ F= [ 0,n^{-a}] \cup [ \pi / 2\l - n^{-a} , \pi / 2\l + n^{-a}] \cup [ \pi / \l - n^{-a} , \pi / \l ].$
We observe that
\begin{align*} 
A(x) 
& = \sum_{j=0}^{m-1} \sum_{k=0}^{\l -1} \left(\cos (2\l j +k) x + \cos ( 2\l j + \l +k ) x \right)^2  + \sum_{j \in \tilde{J}} \cos^2 (j x) \\
& =  4 \cos^2 (\l x/2) \sum_{j=0}^{m-1} \sum_{k=0}^{\l -1} \cos^2 (2\l j + \l/2 +k ) x 
+ \sum_{j \in \tilde{J}} \cos^2 (j x) .
\end{align*}
It is easy to examine that $ A(x)>0 $ on $ E $ since $ \cos (\l x/2) \neq 0 $ on $ E.$
Further,
\begin{align*} 
A(x) 
& = \sum_{j=0}^{m-1} \sum_{k=0}^{\l -1} \left(\cos (2\l j +k) x + \cos ( 2\l j + \l +k ) x \right)^2  + \sum_{j \in \tilde{J}} \cos^2 (j x) \\
& = \sum_{j=0}^{n} \cos^2 (j x)
+ 2 \sum_{j=0}^{m-1} \sum_{k=0}^{\l -1} \cos(2\l j +k) x \cos(2\l j+ \l +k ) x \\
& = \sum_{j=0}^{n} \cos^2 (j x)
+ \sum_{j=0}^{m-1} \sum_{k=0}^{\l -1} \left(\cos(\l x) + \cos(4\l j+\l +2k) x\right) \\
& = \frac{1}{2}\sum_{j=0}^{n} \left(1+ \cos (2j x)\right)
+ \l m \cos(\l x) + \sum_{j=0}^{m-1} \sum_{k=0}^{\l -1} \cos((4j+1)\l +2k) x \\
& = \frac{n+1}{2} + \frac{P_0(1,n+1,x)}{2} 
+ \frac{(n-r)\cos(\l x)}{2}  
+ \sum_{j=0}^{m-1} \sum_{k=0}^{\l -1} \cos (4j+1)\l  x \cos (2kx) \\
& - \sum_{j=0}^{m-1} \sum_{k=0}^{\l -1} \sin (4j+1)\l  x  \sin (2kx) \\
& = \frac{n+1}{2} + \frac{P_0(1,n+1,x)}{2} + \frac{(n-r)\cos(\l x)}{2} 
+ R_{ 0 } (2,m,\l x) \sum_{k=0}^{\l -1} \cos (2k) x  - S_{ 0 } (2,m,\l x) \sum_{k=0}^{\l -1} \sin (2k) x
\com
\end{align*}
and the boundedness of $ \sum_{k=0}^{\l -1} \cos (2k x) $ and $ \sum_{k=0}^{\l -1} \sin (2k x) $ along with Corollary \ref{Cor4.1.1} give that 
\begin{equation} \label{d1}
A(x) = \frac{n}{2} + \frac{n \cos(\l x)}{2} + \O  \! \left(n^ {a}\right)
= n \cos^2(\l x/2) + \O  \! \left(n^ {a}\right) \asntoinfty \ \text{and} \ x \in E.
\end{equation}
We also observe that
\begin{align*} 
C(x) 
& = \sum_{j=0}^{m-1} \sum_{k=0}^{\l -1} \left((2\l j +k) \sin (2\l j +k) x + (2\l j+\l +k) \sin (2\l j+\l +k) x \right)^2  + \sum_{j \in \tilde{J}} j^2 \sin^2 (j x) \notag \\
& = \sum_{j=0}^n j^2 \sin^2 (j x) 
+ 2 \sum_{j=0}^{m-1} \sum_{k=0}^{\l -1} (2\l j +k) (2\l j+\l +k) \sin(2\l j +k) x \sin(2\l j+\l +k) x \notag \\
& = \sum_{j=0}^n j^2 \sin^2 (j x) 
+ \sum_{j=0}^{m-1} \sum_{k=0}^{\l -1} (2\l j +k) (2\l j+\l +k) 
\left(\cos(\l x) - \cos(4\l j+\l +2k) x \right) \notag \\
& = \sum_{j=0}^n j^2 \sin^2 (j x) 
+ \cos(\l x) \sum_{j=0}^{m-1} \sum_{k=0}^{\l -1} (4\l^2 j^2 + 2\l^2 j + 4\l jk + \l k +k^2) \notag \\
& - \sum_{j=0}^{m-1} \sum_{k=0}^{\l -1} (4\l^2 j^2 + 2\l^2 j + 4\l jk + \l k +k^2) \cos (4j+1)\l  x \cos (2kx) \notag \\
& + \sum_{j=0}^{m-1} \sum_{k=0}^{\l -1} (4\l^2 j^2 + 2\l^2 j + 4\l jk + \l k +k^2) \sin (4j+1)\l  x \sin (2kx) .
\end{align*}
Moreover,
\begin{align*} 
\sum_{j=0}^{n} {j}^2 \sin ^2 (j x)
& = \frac{1}{2}\sum_{j=0}^{n} {j}^2 - \frac{1}{2} \sum_{j=0}^{n} {j}^2 \cos(2j x)
= \frac{ n ( n+1 ) ( 2n+1) }{12} - \frac{P_{ 2 } (1,n+1,x)}{2} 
= \frac{ n ^3 }{6} + \O  \! \left(n^{2+a}\right).
\end{align*}
Since $ \sum_{k=0}^{\l -1} k^{\lambda} =\O(1) \ncomma \lambda =1,2 ,$ and $ \sum_{j=0}^{m-1} j =\O  \! \left(m^2\right) $, a basic computation shows that
\begin{align*} 
& \sum_{j=0}^{m-1} \sum_{k=0}^{\l -1}  (4\l^2 j^2 + 2\l^2 j + 4\l jk + \l k +k^2)
=  4 \l ^ 3 \sum_{j=0}^{m-1} j^2 + \left(2\l ^3 + 4\l \sum_{k=0}^{\l -1} k \right) \sum_{j=0}^{m-1} j + \l m \sum_{k=0}^{\l -1} k 
+ m \sum_{k=0}^{\l -1} k^2 \notag \\
& = \frac{4 \l ^ 3 (m-1)m(2m-1) }{6} + \O  \! \left(m^{2}\right) 
= \frac{4\l^3 m^3 }{3} + \O  \! \left(m^{2}\right)
= \frac{4}{3}\left(\frac{n-r}{2}\right)^3 + \O  \! \left(m^{2}\right)
= \frac{n^3}{6} + \O  \! \left(n^2\right).
\end{align*}
We also note that
\begin{align*} 
& \sum_{j=0}^{m-1} \sum_{k=0}^{\l -1} (4\l^2 j^2 + 2\l^2 j + 4\l jk + \l k +k^2) \cos (4j+1)\l  x \cos (2kx) \notag \\
& = \l ^2 \left(4 R_{ 2 } (2,m,\l x) + 2 R_{ 1 } (2,m,\l x) \right) \sum_{k=0}^{\l -1} \cos (2kx)  + \l \left(4 R_{ 1 } (2,m,\l x) 
+ R_{ 0 } (2,m,\l x) \right) 
\sum_{k=0}^{\l -1} k \cos (2kx) \notag \\
& + R_{ 0 } (2,m,\l x) \sum_{k=0}^{\l -1} k^2 \cos (2kx) =  \O  \! \left(m^{2+a}\right) 
=  \O  \! \left(n^{2+a}\right),
\end{align*}
where the second-to-last equality comes from applying Corollary \ref{Cor4.1.1} and the fact that 
$ \sum_{k=0}^{\l -1} k^{\lambda} \cos (2kx) =\O(1)\ncomma \lambda =0, 1, 2.$
We likewise have 
\begin{align*} 
& \sum_{j=0}^{m-1} \sum_{k=0}^{\l -1} (4\l^2 j^2 + 2\l^2 j + 4\l jk + \l k +k^2) \sin (4j+1)\l  x \sin (2kx) =  \O  \! \left(n^{2+a}\right) .
\end{align*}
Thus, 
\begin{equation} \label{d6}
C(x) = \frac{n^3}{6} + \frac{n^3 \cos(\l x)}{6} + \O  \! \left(n^{2+a}\right)
= \frac{n^3 \cos^2(\l x/2)}{3} 
+ \O  \! \left(n^{2+a}\right) \asntoinfty \ \text{and} \ x \in E.
\end{equation}
We next observe that
\begin{align*} 
& B(x) 
= - \sum_{j \in \tilde{J}} j \sin(j x) \cos(j x)  \notag \\
&- \sum_{j=0}^{m-1} \sum_{k=0}^{\l -1} \left( \cos (2\l j +k) x + \cos ( 2\l j + \l +k ) x  \right) \left( (2\l j +k) \sin (2\l j +k) x + (2\l j+\l +k) \sin (2\l j+\l +k) x \right) \notagb	
& = - \sum_{j=0}^n j \sin(j x) \cos(j x)
- \sum_{j=0}^{m-1} \sum_{k=0}^{\l -1} (2\l j +k) \sin (2\l j +k) x \cos (2\l j + \l +k) x \notag \\
& - \sum_{j=0}^{m-1} \sum_{k=0}^{\l -1} (2\l j+\l +k) \sin (2\l j+\l +k) x \cos (2\l j +k) x	\notag \\
& = - \sum_{j=0}^n j \sin(j x) \cos(j x)
- \frac{1}{2} \sum_{j=0}^{m-1} \sum_{k=0}^{\l -1} (2\l j +k) 
\left( \sin(4\l j+\l +2k) x - \sin(\l x)\right) \notag \\
& - \frac{1}{2} \sum_{j=0}^{m-1} \sum_{k=0}^{\l -1} (2\l j+\l +k) 
\left( \sin(4\l j+\l +2k) x + \sin(\l x)\right)	\notag \\
& = - \sum_{j=0}^n j \sin(j x) \cos(j x)
- \frac{1}{2} \sum_{j=0}^{m-1} \sum_{k=0}^{\l -1} \l \sin(\l x) 
- \frac{1}{2} \sum_{j=0}^{m-1} \sum_{k=0}^{\l -1} (4\l j+\l +2k) 
\sin(4\l j+\l +2k) x \notag \\
& = - \sum_{j=0}^n j \sin(j x) \cos(j x)
- \frac{1}{2} \sum_{j=0}^{m-1} \sum_{k=0}^{\l -1} \l \sin(\l x) 
- \frac{1}{2} \sum_{j=0}^{m-1} \sum_{k=0}^{\l -1} (4\l j+\l +2k) 
\sin(4 j+1)\l x \cos(2kx) \notag \\
& - \frac{1}{2} \sum_{j=0}^{m-1} \sum_{k=0}^{\l -1} (4\l j+\l +2k) 
\cos(4 j+1)\l x \sin(2kx).	
\end{align*} 
It is evident that
\begin{align*} 
& \sum_{j=0}^{n} j \sin(j x) \cos(j x) 
= \sum_{j=0}^{n} (j /2) \sin(2j x) = \frac{Q_{1} (1,n+1,x)}{2} = \O  \! \left(n^{1+a}\right),
\end{align*} 
and that
\begin{align*} 
& \sum_{j=0}^{m-1} \sum_{k=0}^{\l -1} \l \sin(\l x) = \O  \! \left(m\right)
= \O  \! \left(n\right) .
\end{align*} 
We also note that
\begin{align*} 
& \sum_{j=0}^{m-1} \sum_{k=0}^{\l -1} (4\l j+\l +2k) 
\sin(4 j+1)\l x \cos(2kx) 
= \l \left( 4 S_{1} (2,m,\l x) + S_{0} (2,m,\l x)\right) 
\sum_{k=0}^{\l -1} \cos(2kx) \notag \\
& + 2 S_{0} (2,m,\l x) \sum_{k=0}^{\l -1} k \cos(2kx) 
= \O  \! \left(m^{1+a}\right) = \O \! \left(n^{1+a}\right) ,
\end{align*} 
where the second-to-last equality comes from applying Corollary \ref{Cor4.1.1} and the fact that 
$ \sum_{k=0}^{\l -1} k^{\lambda} \cos (2kx) =\O(1)\ncomma \lambda =0, 1.$
Similarly, 
\begin{align*} 
& \sum_{j=0}^{m-1} \sum_{k=0}^{\l -1} (4\l j+\l +2k) 
\cos(4 j+1)\l x \sin(2kx)  =  \O  \! \left(n^{1+a}\right) .
\end{align*}
Thus, 
\begin{equation} \label{d7}
B(x) = \O  \! \left(n^{1+a}\right) \asntoinfty \ \text{and} \ x \in E.
\end{equation}
Let $ y = \pi / \l - n^{-a} $, thus is clear that
\begin{align*}
\abs{\cos(\l x/2)} \geqslant \abs{\cos(\l y/2)}
= \sin\left( \l n^{-a}/2 \right)
\geqslant \l n^{-a}/\pi, \ x \in E,
\end{align*}
which implies that $ \sec(\l x/2) = \O  \! \left(n^{a}\right)$ on $ E .$ Now, (\ref{d1}), (\ref{d6}) and (\ref{d7}) give that
\begin{align*} 
& \sqrt{A(x)C(x) - B^2(x)}
= \sqrt{\frac{n^4 \cos^4(\l x/2)}{3} + \O  \! \left(n^{3+a}\right)} 
= \frac{n^2 \cos^2(\l x/2) \sqrt{1 + \sec^4(\l x/2) \O  \! \left(n^{-1+a}\right)}}{\sqrt{3}} \notag \\
& = \frac{n^2 \cos^2(\l x/2) \sqrt{1+ \O  \! \left(n^{-1+5a}\right)}}{\sqrt{3}} 
= \frac{n^2 \cos^2(\l x/2) \left(1+ \O  \! \left(n^{-1+5a}\right)\right)}{\sqrt{3}} \asntoinfty \ \text{and} \ x \in E,
\end{align*}
where the last equality holds since $ a \in (0,1/5). $
In a similar way, 
\begin{align*} 
A(x) = n \cos^2(\l x/2) \left(1+ \O  \! \left(n^{-1+3a}\right)\right) \asntoinfty \ \text{and} \ x \in E.
\end{align*}
So, the last two relations and Proposition \ref{Prop4.1.1} (\textit{Kac-Rice Formula}) give us
\begin{align*} 
\E [ N_{n} ( E ) ] 
& = \frac{1}{\pi} \displaystyle \int_{ E } \dfrac{n \left(1+ \O  \! \left(n^{-1+5a}\right)\right) }{\sqrt{3} \left(1+ \O  \! \left(n^{-1+3a}\right)\right)}  \, dx
=  \frac{n+ \O  \! \left(n^{5a}\right)}{\sqrt{3} \pi} \abs{E} 
= \frac{\left(n+ \O  \! \left(n^{5a}\right)\right)\left(\pi/\l + \O  \! \left(n^{-a}\right)\right)}
{\sqrt{3} \pi}  \notag \\
& =  \frac{n}{\sqrt{3} \l} +  \O  \! \left(n^{5a}\right) 
+  \O  \! \left(n^{1-a}\right) \asntoinfty.
\end{align*}
It also follows from Lemma \ref{Lem3.1} that
$  
\E \left[ N_{n} \! \left(F\right) \right]   
=  \O  \! \left(n^{1-a}\right) .
$
Therefore,
\begin{align*} 
\E [ N_{n} [0,\pi/\l] ] 
& =  \frac{n}{\sqrt{3} \l} +  \O  \! \left(n^{5a}\right) 
+  \O  \! \left(n^{1-a}\right) \asntoinfty ,
\end{align*}
and since $ a \in (0,1/5) ,$ the most best estimate takes place at $ a=1/6 $, thus 
\begin{align*} 
\E [ N_{n} [0,\pi/\l] ] 
& =  \frac{n}{\sqrt{3} \l} +  \O  \! \left(n^{5/6}\right) \asntoinfty .
\end{align*}
Similarly, we may conclude that 
$ 
\E [ N_{n} [ k\pi/\l, (k+1)\pi/\l  ] ] 
=  n/\sqrt{3} \l + \O  \! \left(n^{5/6}\right)
$
for $ k=1, \ldots , 2\l-1 $. Therefore,
\begin{align*} 
& \E [ N_{n} \ztp ] 
= 2 \l \, \E [ N_{n} [0,\pi/\l] ] 
=  \frac{2n}{ \sqrt{3}} + \O \! \left(n^{5/6}\right) \asntoinfty.
\end{align*} 

\end{proof}
\currentpdfbookmark{Proof of Theorem 2.3}{PThm2.3}
\begin{proof}[Proof of Theorem \ref{Thm2.3}] 
It is worthwhile to split the proof into two cases based on $ n $ being odd or even. Indeed, when $ n $ is odd, the proof is straightforward since we have plenty of deterministic zeros.

\emph{First case}: Say $ n $ is odd, so $ m= (n-1)/2 .$ For $ x \in \ztp $, we see that
\begin{align*}
V_n(x)
& = \sum _{j=0} ^{n} a_j \cos (j x)
= \sum_{j=0}^{m} a_{j}  \left( \cos (jx) + \cos (j+ (n+1)/2 ) x \right) 
= 2 \cos ( (n+1)x/4 ) \cdot V^{*}_n(x) \com
\end{align*}
where $ V^{*}_n(x):=\sum_{j=0}^{m}  a_j \cos (j+ (n+1)/4 )x .$
Let 
$ N_{n} \ztp $ and $ N^{*}_{n} \ztp $ be the number of real zeros of $  V_n  $ and $  V^{*}_n $ in $ \ztp $ respectively. Fix $ a \in (0,1/2) ,$ we first show that 
$ 
\E [N^{*}_{n} \zp ]
=  \left(\sqrt{13} / 4\sqrt{3}\right) n + \O  \! \left(n^{1-a}\right) 
$
as $ n \to \infty $.
Let $ x \in E = [0,\pi] \setminus F $ with $ F= [ 0,n^{-a} ] \cup [ \pi - n^{-a} , \pi ] $. We observe that 
\begin{align*} 
A(x) 
& = \sum_{j=0}^{m} \cos ^2 (j+ (n+1)/4 )x 
= \frac{m+1}{2} + \frac{1}{2} \sum_{j=0}^{m}  \cos ( 2j+ (n+1)/2 ) x.
\end{align*}
It follows from \cite[1.341(1,3) on p. 29]{GR} that, for all $ x \in \zp $,
\begin{align} \label{d9}
& \sum_{j=0}^{m-1}  \cos ( 2j + p ) x = \frac{\cos( m-1+p)x \sin(mx)}{\sin(x)}
& & \text{and}  
& \sum_{j=0}^{m-1}  \sin ( 2j + p ) x = \frac{\sin( m-1+p)x \sin(mx)}{\sin(x)} .
\end{align}
Hence, (\ref{d9}) helps us write
\begin{align*} 
A(x) & = \frac{ m+1}{2} \left(1 +  \frac{\cos ( nx ) \sin (m+1)x  }{(m+1) \sin(x)} \right) .
\end{align*}
Markov's inequality for algebraic polynomials \cite[Theorem 15.1.4. p. 567]{RS} gives 
$
\abs{\sin (m+1)x / \sin(x)} < m+1 , 
$
which implies that $ A(x)>0 $ on $ E. $ We also note that
\begin{align*} 
A(x) 
& = \frac{n}{4} 
+ \frac{\cos ( (n+1)x/2  )}{2} \sum_{j=0}^{m}  \cos ( 2jx )  
- \frac{\sin ( (n+1)x/2  )}{2} \sum_{j=0}^{m}  \sin ( 2jx ) \\
& = \frac{n}{4} 
+ \frac{\cos ((n+1)x/2) \, P_{0} (1,m+1,x)}{2}    
- \frac{\sin ((n+1)x/2) \, Q_{0} (1,m+1,x)}{2}  \com
\end{align*}
so by Lemma \ref{Lem4.1} we have
\begin{equation} \label{d10}
A(x) = \frac{n}{4} + \O  \! \left(n^{a}\right) 
= \frac{n \left( 1 + \O  \! \left(n^{-1+a}\right)\right)}{4} 
\asntoinfty \ \text{and} \ x \in E.
\end{equation}
We note that
\begin{align*} 
C(x) 
& = \sum_{j=0}^{m}  \left( j+ (n+1)/4 \right) ^2 \sin ^2 (j+ (n+1)/4 )x 
= \sum_{j=0}^{m} \left( j^2 /2 + (n+1)j/4  + (n+1)^2 /32 \right) \notag \\
& - \cos((n+1)x/2) \sum_{j=0}^{m}  \left( j^2 /2 + (n+1)j/4  + (n+1)^2 /32 \right)   \cos (2jx) \notag \\
& + \sin((n+1)x/2) \sum_{j=0}^{m}  \left( j^2 /2 + (n+1)j/4  + (n+1)^2 /32 \right)   \sin (2jx) .
\end{align*}
It is obvious that
\begin{align*} 
\sum_{j=0}^{m} \left( j^2 /2 + (n+1)j/4  + (n+1)^2 /32 \right)
& = \frac{m(m+1)(2m+1)}{12} + \frac{(n+1)m(m+1)}{8} +  \frac{(n+1)^2 m}{32} \notag \\
& = \frac{13n^ 3}{192} + \O  \! \left(n^{2}\right).
\end{align*}
Applying Lemma \ref{Lem4.1} gives us
\begin{align*} 
\sum_{j=0}^{m}  \left( j^2 /2 + (n+1)j/4  + (n+1)^2 /32 \right)   \cos (2jx) 
& = \frac{P_{2} (1,m+1,x) }{2} + \frac{(n+1) P_{1} (1,m+1,x)}{4} \notag \\
& +  \frac{(n+1)^2 P_{0} (1,m+1,x)}{32} 
= \O  \! \left(m^{2+a}\right) .
\end{align*}
In a similar way, we have
\begin{align*} 
& \sum_{j=0}^{m}  \left( j^2 /2 + (n+1)j/4  + (n+1)^2 /32 \right) \sin (2jx) = \O  \! \left(m^{2+a}\right) .
\end{align*}
Therefore, 
\begin{equation} \label{d15}
C(x) = \frac{13n^ 3}{192} + \O  \! \left(n^{2+a}\right) \asntoinfty \ \text{and} \ x \in E.
\end{equation}
In addition,
\begin{align*}
B(x) 
& = - \sum_{j=0}^{m} \left(j+ (n+1) /4 \right) \sin (j+ (n+1) /4 )x \, \cos ( j+ (n+1) /4 )x \\
& = - \cos ( (n+1)x/2 ) \sum_{j=0}^{m} \left(j /2+ (n+1) /8 \right) \sin (2jx)  
- \sin ( (n+1)x/2 ) \sum_{j=0}^{m} \left(j /2+ (n+1) /8 \right) \cos (2jx)  \\
& = - \cos ( (n+1)x/2 ) \left( \frac{Q_{1} (1,m+1,x)}{2} + \frac{(n+1) \, Q_{0} (1,m+1,x)}{8} \right) \\
& - \sin ( (n+1)x/2 ) \left( \frac{P_{1} (1,m+1,x)}{2} + \frac{(n+1) \, P_{0} (1,m+1,x)}{8} \right) , 
\end{align*}
hence,
\begin{equation} \label{d16}
B(x) = \O  \! \left(n^{1+a}\right) \asntoinfty \ \text{and} \ x \in E.
\end{equation}
Therefore, (\ref{d10}), (\ref{d15}) and (\ref{d16}) imply that
\begin{align*} 
\sqrt{ A(x) C(x) - B^2(x) }
& = \sqrt{ \frac{13  n^4}{768} + \O  \! \left(n^{3+a}\right) } 
= \frac{\sqrt{13} n^2 \left(1 + \O  \! \left(n^{-1+a}\right)\right) }{16 \sqrt{3}}\asntoinfty \ \text{and} \ x \in E,
\end{align*}
and it follows from the Kac-Rice Formula (Proposition \ref{Prop4.1.1}) that
\begin{align*} 
\E [ N^{*}_{n} ( E ) ]
& = \frac{1}{\pi} \displaystyle \int_{ E } 
\dfrac{\sqrt{13} n \left(1+ \O  \! \left(n^{-1+a}\right)\right) }{4 \sqrt{3} \left(1+ \O  \! \left(n^{-1+a}\right)\right)} \, dx 
= \dfrac{\sqrt{13} n \left(1+ \O  \! \left(n^{-1+a}\right)\right) }{4 \sqrt{3}\pi } \abs{E}
= \dfrac{\left(\sqrt{13} n + \O  \! \left(n^{a}\right) \right)\left( \pi +  \O  \! \left(n^{-a}\right) \right)}{4 \sqrt{3}\pi }   \\
& = \frac{ \sqrt{13} }{4 \sqrt{3}} \, n  + \O  \! \left(n^{1-a}\right) \asntoinfty .
\end{align*}
Since 
$  
\E [ N^{*}_{n} (F) ]
=  \O  \! \left(n^{1-a}\right) ,
$
we see that 
$ 
\E [ N^{*}_{n} \ztp ] = 2 \, \E [ N^{*}_{n} \zp ]
=  ( \sqrt{13} / 2\sqrt{3})n + \O  \! \left(n^{1-a}\right).
$
\noindent
Thus, considering the $ (n+1)/2 $ distinct (deterministic) roots of $ \cos ( (n+1)x/4 ) ,$  in one period, we have
\begin{equation} \label{d16-1}
\E [ N_{n} \ztp ]
= \E \left[ \frac{n+1}{2} + N^{*}_{n} \ztp \right] 
= \frac{ n }{ 2  } + \frac{\sqrt{13} }{ 2 \sqrt{3}} \, n 
+ \O  \! \left(n^{1-a}\right) \asntoinfty .
\end{equation} 

\emph{Second case}: Fix $ a \in (0,1/5) ,$ and let $ n $ be even, that is, $ m= n/2-1 .$ For $ x \in \ztp $, we see that
\begin{align*}
V_n(x)
& = \sum _{j=0} ^{n} a_j \cos (j x)
= \sum_{j=0}^{m} a_{j}  \left( \cos (jx) + \cos (j+ n/2 ) x \right) + a_n \cos(nx) \\
& = 2 \cos (nx/4 ) \sum_{j=0}^{m}  a_j \cos (j+ n/4 )x + a_n \cos(nx).
\end{align*}
Let $ x \in E = [0,\pi] \setminus F $ with $ F= [ 0,n^{-a} ] \cup [ \pi - n^{-a} , \pi ] $. With help of (\ref{d9}) we have
\begin{align*} 
A(x) 
& = 4\cos^2 (nx/4) \sum_{j=0}^{m} \cos ^2 (j+ n/4 )x + \cos^2(nx)
= 2\cos^2 (nx/4 ) \sum_{j=0}^{m} \left( 1+ \cos(2j+ n/2 )x \right) + \cos^2(nx) \notag \\
& = 2\cos^2 (nx/4) \left( \frac{n}{2} + \sum_{j=0}^{m} \cos(2j+ n/2 )x \right) + \cos^2(nx) \notag \\
& = 2\cos^2 (nx/4) \left(\frac{n}{2} + \frac{\cos(n-1)x \sin(nx/2)}{\sin(x)} \right) + \cos^2(nx) \notag \\
& = n\cos^2 (nx/4) \left(1 + \frac{2 \cos(n-1)x \sin(nx/2)}{n  \sin(x)} \right) + \cos^2(nx).
\end{align*}
Markov's inequality for algebraic polynomials \cite[Theorem 15.1.4. p. 567]{RS} gives 
$
\abs{\sin (nx/2) / \sin(x)} < n/2  ,
$
and since the zeros of $\cos(nx/4)$ and $ \cos(nx) $ do not coincide, $ A(x)>0 $ on $ E. $ Note that $ \sec(x) = \O  \! \left(n^{a}\right) $, $ x \in E, $ hence
\begin{align} \label{f2}
A(x) 
& = n\cos^2 (nx/4) + \cos^2(nx) + \frac{2 \cos(n-1)x \sin(nx/2) \cos^2 (nx/4)}{\sin(x)} \notag \\
& = n\cos^2 (nx/4) + \cos^2(nx) +  \sin(nx/2) \cos^2 (nx/4) \, \O  \! \left(n^{a}\right) \asntoinfty \ \text{and} \ x \in E.
\end{align}
We note that
\begin{align*} 
C(x) 
& = \sum_{j=0}^{m} \left( j\sin(jx) + (j+ n/2 ) \sin (j+ n/2 )x \right)^2  +  n^2 \sin^2 (nx) \notag \\
& = \sum_{j=0}^n j^2 \sin^2 (j x) 
+ 2 \sum_{j=0}^{m} j (j+ n/2) \sin(jx) \sin(j+ n/2) x \notag \\
& = \sum_{j=0}^n j^2 \sin^2 (j x) 
+ \sum_{j=0}^{m} (j^2+ nj/2) 
\left(\cos(nx/2) - \cos(2j+n/2) x \right) \notag \\
& = \sum_{j=0}^n j^2 \sin^2 (j x) 
+ \cos(nx/2) \sum_{j=0}^{m} (j^2+ nj/2) 
- \cos(nx/2) \sum_{j=0}^{m} (j^2+ nj/2) \cos(2jx) \notag \\
& + \sin(nx/2) \sum_{j=0}^{m} (j^2+ nj/2) \sin(2jx) .
\end{align*}
It is worthwhile to note that
\begin{align*} 
\sum_{j=0}^{n} {j}^2 \sin ^2 (j x)
& = \frac{1}{2}\sum_{j=0}^{n} {j}^2 - \frac{1}{2} \sum_{j=0}^{n} {j}^2 \cos(2j x)
= \frac{ n ( n+1 ) ( 2n+1) }{12} - \frac{P_{ 2 } (1,n+1,x)}{2} 
= \frac{ n ^3 }{6} + \O  \! \left(n^{2+a}\right) .
\end{align*}
It is clear that
\begin{align*} 
\sum_{j=0}^{m} (j^2+ nj/2)
& = \frac{m(m+1)(2m+1)}{6} + \frac{n m(m+1)}{4} 
= \frac{m^3}{3} + \frac{n m^2}{4} + \O  \! \left(m^{2}\right) 
= \frac{5 n^ 3}{48}+ \O  \! \left(n^{2}\right), 
\end{align*}
and that
\begin{align*} 
& \sum_{j=0}^{m} (j^2+ nj/2) \cos(2jx)
= P_{ 2 } (1,m+1,x) + n P_{ 1 } (1,m+1,x)/2  
=  \O  \! \left(m^{2+a}\right)
=  \O  \! \left(n^{2+a}\right).
\end{align*}
We likewise have
\begin{align*} 
& \sum_{j=0}^{m} (j^2+ nj/2) \sin(2jx) =  \O  \! \left(n^{2+a}\right) .
\end{align*}
Thus, 
\begin{align} \label{f8}
C(x) 
& = \frac{n^3}{6} + \frac{5n^3 \cos(n x/2)}{48} + \O  \! \left(n^{2+a}\right)
= \frac{n^3}{16} + \frac{5n^3 \cos^2(n x/4)}{24} 
+ \O  \! \left(n^{2+a}\right) \asntoinfty \ \text{and} \ x \in E.	
\end{align}
We also observe that
\begin{align*} 
B(x) 
& = - \sum_{j=0}^{m} \left( \cos (jx) + \cos (j+ n/2)x  \right) 	\left(j\sin(jx) + (j+ n/2 ) \sin (j+ n/2 )x\right) 
-  n \sin(nx) \cos(nx) \notag \\
& = - \sum_{j=0}^n j \sin(jx) \cos(jx)
- \frac{1}{2} \sum_{j=0}^{m} j \left( \sin(2j+ n/2)x - \sin(nx/2) \right) \notag \\
& - \frac{1}{2} \sum_{j=0}^{m} (j+ n/2) \left( \sin(2j+ n/2)x + \sin(nx/2) \right)	\notag \\
& = - \sum_{j=0}^n j \sin(j x) \cos(j x)
- \frac{1}{2} \sum_{j=0}^{m} (n/2) \sin(nx/2)
- \frac{1}{2} \sum_{j=0}^{m} (2j+ n/2) \sin(2j+ n/2)x \notag \\
& = - \sum_{j=0}^n j \sin(j x) \cos(j x)
- \frac{n^2 \sin(nx/2)}{8} 
- \frac{\cos(nx/2)}{2} \sum_{j=0}^{m} (2j+ n/2) \sin(2jx) \notag \\
& - \frac{\sin(nx/2)}{2} \sum_{j=0}^{m} (2j+ n/2) \cos(2jx).	
\end{align*} 
It is trivial to verify that
\begin{align*} 
& \sum_{j=0}^{n} j \sin(j x) \cos(j x) 
= \sum_{j=0}^{n} (j /2) \sin(2j x) = \frac{Q_{1} (1,n+1,x)}{2} = \O  \! \left(n^{1+a}\right),
\end{align*} 
and that
\begin{align*} 
& \sum_{j=0}^{m} (2j+ n/2) \sin(2jx) 
= 2 Q_{1} (1,m+1,x) + \frac{n\, Q_{0} (1,m+1,x)}{2}
= \O  \! \left(n^{1+a}\right) .
\end{align*} 
Comparably, 
\begin{align*} 
& \sum_{j=0}^{m} (2j+ n/2) \cos(2jx) 
= \O  \! \left(n^{1+a}\right).
\end{align*}
Thus, 
\begin{equation} \label{f13}
B(x) = - \frac{n^2 \sin(nx/2)}{8} + \O \!\left(n^{1+a}\right) \asntoinfty \ \text{and} \ x \in E.
\end{equation}
Let $ Z $ be the set of all zeros of $ \cos(nx/4) $ lying in $ E. $
Then, each element of $ Z $ may be written as $ x_k= (4k+2)\pi/n $ for some positive integer $ k $, and we have 
$ n/4 +  \O  \! \left(n^{1-a}\right)$
of such elements.
For $ x_k \in Z ,$ we define
$ \Omega'_k = \left[ x_k - n^{-1-a},x_k + n^{-1-a} \right] $, $ U= \cup_{x_k\in Z} \, \Omega'_k $ and
$ G = E \setminus U. $
It is also clear that $ \sec(nx/4) = \O  \! \left(n^{a}\right)$ on $ G $ since
\begin{align*}
\abs{\cos\left(nx/4 \right)} \geqslant \abs{\cos\left(n \left( x_k \pm n^{-1-a} \right)/4 \right)} = \sin\left(n^{-a}/4 \right) \geqslant n^{-a}/2\pi .
\end{align*}
It follows from (\ref{f2}) and the inequality above that
\begin{align} \label{f15}
A(x) 
& = n\cos^2 (nx/4) + \O  \! \left(1\right) 
+  \O  \! \left(n^{a}\right)
= n \cos^2((nx/4) + \O  \! \left(n^ {a}\right) \notagb
& = n \cos^2(nx/4) \left(1+ \O  \! \left(n^{-1+3a}\right)\right) \asntoinfty \ \text{and} \ x \in G.
\end{align}
A simple computation shows that
\begin{align} \label{f16}
& n\cos^2 (nx/4) \left(\frac{n^3}{16} + \frac{5n^3 \cos^2(n x/4)}{24}\right) - \left(\frac{n^2 \sin(nx/2)}{8}\right)^2
= \frac{n^4 \left(1+\cos(nx/2)\right) \left(8+5\cos(nx/2)\right)}{96}
\notag \\
& - \frac{n^4 \sin^2(nx/2)}{64}
= \frac{n^4 \left(13+26\cos(nx/2)+13\cos^2(nx/2)\right)}{192}
= \frac{13n^4\cos^4(nx/4)}{48}.
\end{align}
Therefore, (\ref{f8})-(\ref{f16}) give that
\begin{align*} 
& \sqrt{A(x)C(x) - B^2(x)}
= \sqrt{\frac{13n^4\cos^4(nx/4)}{48} + \O  \! \left(n^{3+a}\right)} 
= \frac{\sqrt{13}n^2\cos^2(nx/4) \sqrt{1 + \sec^4(nx/4) \O  \! \left(n^{-1+a}\right)} }{4\sqrt{3}} \notag \\
& = \frac{\sqrt{13}n^2\cos^2(nx/4) \sqrt{1+ \O  \! \left(n^{-1+5a}\right)} }{4\sqrt{3}}
= \frac{\sqrt{13}n^2\cos^2(nx/4) \left(1+ \O  \! \left(n^{-1+5a}\right)\right) }{4\sqrt{3}}\asntoinfty \ \text{and} \ x \in G,
\end{align*}
where the last equality holds since $ a \in (0,1/5). $
So, Proposition \ref{Prop4.1.1} gives us
\begin{align*} 
\E [ N_{n} ( G ) ] 
& = \frac{1}{\pi} \displaystyle \int_{G} \dfrac{\sqrt{13} n \left(1+ \O  \! \left(n^{-1+5a}\right)\right) }{4\sqrt{3} \left(1+ \O  \! \left(n^{-1+3a}\right)\right)}  \, dx
=  \frac{\sqrt{13} n+ \O  \! \left(n^{5a}\right)}{4\sqrt{3} \pi} \abs{G} 
= \frac{\left(\sqrt{13} n+ \O  \! \left(n^{5a}\right)\right)\left(\pi+ \O  \! \left(n^{-a}\right)\right)}{4\sqrt{3} \pi}  \notag \\
& =  \frac{\sqrt{13} n}{4\sqrt{3}} +  \O  \! \left(n^{5a}\right) 
+  \O  \! \left(n^{1-a}\right)\asntoinfty .
\end{align*}
It also follows from Lemma \ref{Lem3.1} that
$  
\E [ N_{n} (F) ]    
=  \O  \! \left(n^{1-a}\right) 
$
as $ n \to \infty. $ Therefore,
\begin{align*} 
\E [ N_{n} (G \cup F) ] 
& = \frac{\sqrt{13} n}{4\sqrt{3}} +  \O  \! \left(n^{5a}\right) 
+  \O  \! \left(n^{1-a}\right)\asntoinfty.
\end{align*}
We observe that the best estimate in above occurs at $ a=1/6 $ since $ a \in (0,1/5) .$ Set $a=1/6$ in all the computations so far. Thus, 
\begin{align} \label{f21}
\E [ N_{n} (G \cup F) ] 
& = \frac{\sqrt{13} n}{4\sqrt{3}} 
+ \O  \! \left(n^{5/6}\right) \asntoinfty ,
\end{align}
where $ G = E \setminus U $ and $ U= \bigcup_{x_k\in Z} \, \Omega'_k $ with $ \Omega'_k = \left[ x_k - n^{-7/6},x_k + n^{-7/6} \right]. $
To reach our objective, we need to show that
\begin{align} \label{f22}
\E \left[ N_{n} (U) \right] 
& = \frac{n}{4} + \O  \! \left(n^{5/6}\right) \asntoinfty .
\end{align}
Since we have $ n/4 + \O \!\left(n^{5/6}\right)$ of the $ \Omega'_k $ in $ (0,\pi) $, it makes sense to show that 
\begin{align*} 
\E \left[ N_{n} \left(\Omega'_k\right) \right] 
& = 1 + \O  \! \left(n^{-1/6}\right) \asntoinfty .
\end{align*}
Equivalently, we would like to prove that
\begin{align} \label{f24}
\E \left[ N_{n} \left(\Omega_k\right) \right] 
& = \frac{1}{2} + \O\!\left(n^{-1/6}\right) \asntoinfty ,
\end{align}
where $ \Omega_k = \left[ x_k,x_k + n^{-7/6} \right], $ and (\ref{f24}) is independent of any choice of the $x_k.$
Decomposing $ \Omega_k $ into four subintervals as 
$ \Omega_{k,1} = \big[ x_k+ n^{-5/4},x_k + n^{-7/6} \big], $ 
$ \Omega_{k,2} = \big[ x_k+ n^{-4/3},x_k + n^{-5/4} \big], $
$ \Omega_{k,3} = \big[ x_k+ n^{-5/3},x_k + n^{-4/3} \big] $
and $ \Omega_{k,4} = \big[ x_k,x_k + n^{-5/3} \big]$ provides a satisfactory path to reach (\ref{f24}). Given a small enough $ \eps >0 $, we also define 
$ \Omega_{k,1,\eps} = \big[ x_k+ n^{-5/4+\eps},x_k + n^{-7/6-\eps} \big]$ 
and 
$ \Omega_{k,2,\eps} = \big[ x_k+ n^{-4/3},x_k + n^{-5/4-\eps} \big].$  Table \ref{table1} will play a time-saving role in what follows.

\begin{table}[ht]
\renewcommand{\arraystretch}{1.5}
\begin{tabular}{!{\vrule width 1.5pt} l !{\vrule width 1pt} p{2cm}|p{2cm}|p{1.5cm}|p{1.5cm}!{\vrule width 1.5pt}}
\hw{1.5pt}
& \multicolumn{1}{c|}{on $\Omega_{k,1,\eps}$} & \multicolumn{1}{c|}{on $\Omega_{k,2,\eps}$} & \multicolumn{1}{c|}{on $\Omega_{k,3}$} & \multicolumn{1}{c!{\vrule width 1.5pt}}{on $\Omega_{k,4}$} \\
\hw{1pt}
$\cos(nx/4)$ & $\O\!\left(n^{-1/6-\eps}\right)$ & $\O\!\left(n^{-1/4-\eps}\right)$ & $\O\!\left(n^{-1/3}\right)$ & $\O\!\left(n^{-2/3}\right) $\\
\hline
$\sin(nx/2)$ & $\O\!\left(n^{-1/6-\eps}\right)$ & $\O\!\left(n^{-1/4-\eps}\right)$ & $\O\!\left(n^{-1/3}\right)$ & $\O\!\left(n^{-2/3}\right) $\\
\hline
$\sin(nx/2)\cos^2(nx/4)$ & $\O\!\left(n^{-1/2-3\eps}\right)$ & $\O\!\left(n^{-3/4-3\eps}\right)$ & $\O\!\left(n^{-1}\right)$ & $\O\!\left(n^{-2}\right)$\\
\hline
$n\cos^2(nx/4)$ & $\O\!\left(n^{2/3-2\eps}\right)$ & $\O\!\left(n^{1/2-2\eps}\right)$ & $\O\!\left(n^{1/3}\right)$ & $\O\!\left(n^{-1/3}\right)$\\
\hline
$n^3\cos^2(nx/4)$ & $\O\!\left(n^{8/3-2\eps}\right)$ & $\O\!\left(n^{5/2-2\eps}\right)$ & $\O\!\left(n^{7/3}\right)$ & $\O\!\left(n^{5/3}\right)$\\
\hline
$n^4\cos^4(nx/4)$ & $\O\!\left(n^{10/3-4\eps}\right)$ & $\O\!\left(n^{3-4\eps}\right)$ & $\O\!\left(n^{8/3}\right)$ & $\O\!\left(n^{4/3}\right)$\\
\hline
$n^2\sin(nx/2)$ & $\O\!\left(n^{11/6-\eps}\right)$ & $\O\!\left(n^{7/4-\eps}\right)$ & $\O\!\left(n^{5/3}\right)$ & $\O\!\left(n^{4/3}\right)$\\
\hline
$\sec(nx/4)$ & $\O\!\left(n^{1/4-\eps}\right)$ & $\O\!\left(n^{1/3}\right)$ & $\O\!\left(n^{2/3}\right)$ & \\
\hw{1.5pt}
\end{tabular} 
\caption{Approximations on the given subintervals}
\label{table1}
\end{table}	
First, we show that
\begin{align} \label{f25}
\E \left[ N_{n} \left(\Omega_{k,1}\right) \right] 
& = \O\!\left(n^{-1/6}\right) \asntoinfty .
\end{align}
One may verify that (\ref{f2}), (\ref{f8}) and (\ref{f13}) along with Table \ref{table1} imply that
\begin{align*} 
A(x) 
& = n\cos^2 (nx/4) + \O\!\left(1\right) + \O\!\left(n^{-1/3-3\eps}\right)
= n\cos^2 (nx/4) + \O\!\left(1\right) \notagb
& = n \cos^2(nx/4) \left(1+ \O  \! \left(n^{-1/2-2\eps}\right)\right) 
\asntoinfty \ \text{and} \ x \in \Omega_{k,1,\eps}.
\end{align*}
\begin{align*} 
C(x) = \frac{n^3}{16} + \frac{5n^3 \cos^2(n x/4)}{24} 
+ \O  \! \left(n^{13/6}\right) \asntoinfty \ \text{and} \ x \in \Omega_{k,1,\eps}
\end{align*}
and
\begin{align*} 
B(x) = - \frac{n^2 \sin(nx/2)}{8} + \O\!\left(n^{7/6}\right) \asntoinfty \ \text{and} \ x \in \Omega_{k,1,\eps}.
\end{align*}
Then, (\ref{f16}) and Table \ref{table1} give us
\begin{align*} 
\sqrt{A(x)C(x) - B^2(x)}
& = \sqrt{\frac{13n^4\cos^4(nx/4)}{48} + \O  \! \left(n^{3}\right)} 
= \frac{\sqrt{13}n^2\cos^2(nx/4) \sqrt{1 + \sec^4(nx/4) \O  \! \left(n^{-1}\right)} }{4\sqrt{3}} \notag \\
& = \frac{\sqrt{13}n^2\cos^2(nx/4) \left(1+ \O  \! \left(n^{-4\eps}\right)\right) }{4\sqrt{3}}\asntoinfty \ \text{and} \ x \in \Omega_{k,1,\eps},
\end{align*}
So, the Kac-Rice Formula implies that
\begin{align*} 
\E [ N_{n} \left(\Omega_{k,1,\eps}\right) ] 
& = \frac{1}{\pi} \displaystyle \int_{\Omega_{k,1,\eps}} 
\dfrac{\sqrt{13} n \left(1+ \O  \! \left(n^{-4\eps}\right)\right) dx}{4\sqrt{3} \left(1+ \O  \! \left(n^{-1/2-2\eps}\right)\right)}  
=  \dfrac{\sqrt{13} n + \O  \! \left(n^{1-4\eps}\right) }{4\sqrt{3} \pi} \abs{\Omega_{k,1,\eps}} \notag \\
& = \O\!\left(n\right)   \O\!\left(n^{-7/6-\eps}\right)
=  \O  \! \left(n^{-1/6-\eps}\right)\asntoinfty .
\end{align*}
(\ref{f25}) then follows by letting $ \eps \to 0^+. $

Second step requires showing that
\begin{align} \label{f31}
\E \left[ N_{n} \left(\Omega_{k,2}\right) \right] 
& = \O\!\left(n^{-1/6}\right) \asntoinfty .
\end{align}
It follows from (\ref{f2}), (\ref{f8}), (\ref{f13}) and the above table that
\begin{align*} 
A(x) 
& = n\cos^2 (nx/4) + \cos^2(nx) + \O\!\left(n^{-7/12-3\eps}\right)
\asntoinfty \ \text{and} \ x \in \Omega_{k,2,\eps},
\end{align*}
\begin{align*} 
C(x) 
& = \frac{n^3}{16} + \frac{5n^3 \cos^2(n x/4)}{24} 
+ \O  \! \left(n^{13/6}\right) 
= \frac{n^3}{16} + \O\!\left(n^{5/2-2\eps}\right)
+  \O  \! \left(n^{13/6}\right) \notag \\
& = \frac{n^3}{16} + \O\!\left(n^{5/2-2\eps}\right)
\asntoinfty \ \text{and} \ x \in \Omega_{k,2,\eps}
\end{align*}
and
\begin{align*} 
B(x) = - \frac{n^2 \sin(nx/2)}{8} + \O\!\left(n^{7/6}\right) \asntoinfty \ \text{and} \ x \in \Omega_{k,2,\eps}.
\end{align*}
The same argument as (\ref{f16}) can easily be adapted to see
\begin{align*} 
& \frac{n^4 \cos^2 (nx/4)}{16} - \left(\frac{n^2 \sin(nx/2)}{8}\right)^2
= \frac{n^4 \cos^4 (nx/4)}{16}.
\end{align*}
Therefore, the relations above and Table \ref{table1} imply that
\begin{align*} 
\sqrt{A(x)C(x) - B^2(x)}
& = \sqrt{\frac{n^3\cos^2(nx)}{16} + \frac{n^4 \cos^4 (nx/4)}{16}
+ \O  \! \left(n^{3-4\eps}\right)} 
= \sqrt{\frac{n^3\cos^2(nx)}{16} + \O  \! \left(n^{3-4\eps}\right)} \notag \\
& = \frac{n^{3/2}\cos(nx) \left(1+ \O  \! \left(n^{-4\eps}\right)\right)}{4} \asntoinfty \ \text{and} \ x \in \Omega_{k,2,\eps}.
\end{align*}
Thus, with help of Kac-Rice's Formula we have
\begin{align*} 
\E [ N_{n} \left(\Omega_{k,2,\eps}\right) ] 
&= \frac{1}{\pi} \displaystyle \int_{\Omega_{k,2,\eps}} 
\dfrac{n^{3/2}\cos(nx) \left(1+ \O  \! \left(n^{-4\eps}\right)\right) dx }
{4 \left(n \cos^2(nx/4) + \cos^2(nx)\right) \left(1+ \O  \! \left(n^{-7/12-3\eps}\right)\right)} \notag \\
& =  \dfrac{n^{3/2}\left(1+ \O  \! \left(n^{-4\eps}\right)\right) }{4\pi} \displaystyle \int_{\Omega_{k,2,\eps}} 
\dfrac{\cos(nx) \, dx}
{n \cos^2(nx/4) + \cos^2(nx)} .
\end{align*}
For the moment, letting $\eps \to 0^+$ we obtain
\begin{align*} 
\E [ N_{n} \left(\Omega_{k,2}\right) ] 
& =  \O \!\left(n^{3/2}\right)
\int_{\Omega_{k,2}} 
\dfrac{\cos(nx) \, dx}
{n \cos^2(nx/4) + \cos^2(nx)} .
\end{align*}
Change of variables $ u=n^{3/2}\left( x-x_k \right) $ assists us to write
\begin{align*} 
0 \leqslant \int_{\Omega_{k,2}} \dfrac{\cos(nx) \, dx}{n \cos^2(nx/4) + \cos^2(nx)} 
& \leqslant \int_{x_k+n^{-4/3}}^{x_k+n^{-5/4}} \dfrac{dx}{n \cos^2(nx/4)} 
= n^{-3/2} \int_{n^{1/6}}^{n^{1/4}} \dfrac{dx}{n \sin^2 \left(x/4\sqrt{n}\right)} \notag \\
& \leqslant 4 \pi^2 n^{-3/2} \int_{n^{1/6}}^{n^{1/4}} \dfrac{dx}{x^2}
= 4 \pi^2 n^{-3/2} \left( n^{-1/6} -n^{-1/4} \right).
\end{align*}
Hence, the last two relations guarantee that (\ref{f31}) holds.

In the third place, we would like to prove that
\begin{align} \label{f39}
\E \left[ N_{n} \left(\Omega_{k,3}\right) \right] 
& = \frac{1}{2}+ \O\!\left(n^{-1/6}\right) \asntoinfty .
\end{align}
We follow the same procedure as before and observe that
\begin{align*} 
A(x) 
& = n\cos^2 (nx/4) + \cos^2(nx) + \O\!\left(n^{-5/6}\right)
\asntoinfty \ \text{and} \ x \in \Omega_{k,3},
\end{align*}
\begin{align*} 
C(x) 
& = \frac{n^3}{16} + \frac{5n^3 \cos^2(n x/4)}{24} 
+ \O  \! \left(n^{13/6}\right) 
= \frac{n^3}{16} + \O\!\left(n^{7/3}\right)
+  \O  \! \left(n^{13/6}\right) \notag \\
& = \frac{n^3}{16} + \O\!\left(n^{7/3}\right)
\asntoinfty \ \text{and} \ x \in \Omega_{k,3},
\end{align*}
and that
\begin{align*} 
B(x) = - \frac{n^2 \sin(nx/2)}{8} + \O\!\left(n^{7/6}\right) \asntoinfty \ \text{and} \ x \in \Omega_{k,3}.
\end{align*}
Hence, 
\begin{align*} 
\sqrt{A(x)C(x) - B^2(x)}
& = \sqrt{\frac{n^3\cos^2(nx)}{16} + \frac{n^4 \cos^4 (nx/4)}{16}
+ \O  \! \left(n^{8/3}\right)} 
= \sqrt{\frac{n^3\cos^2(nx)}{16} + \O  \! \left(n^{8/3}\right)} \notag \\
& = \frac{n^{3/2}\cos(nx) \left(1+ \O  \! \left(n^{-1/3}\right)\right)}{4} \asntoinfty \ \text{and} \ x \in \Omega_{k,3}.
\end{align*}
So, the Kac-Rice Formula together with change of variables $ u=n^{3/2}( x-x_k ) $ give us
\begin{align} \label{f46}
\E [ N_{n} \left(\Omega_{k,3}\right) ] 
& = \frac{1}{\pi} \displaystyle \int_{\Omega_{k,3}} 
\dfrac{n^{3/2}\cos(nx) \left(1+ \O  \! \left(n^{-1/3}\right)\right) dx }
{4 \left(n \cos^2(nx/4) + \cos^2(nx)\right) \left(1+ \O  \! \left(n^{-5/6}\right)\right)} \notag \\
& =  \dfrac{1+ \O  \! \left(n^{-1/3}\right)}{4\pi} \displaystyle \int_{\Omega_{k,3}} 
\dfrac{n^{3/2} \cos(nx) \, dx}
{n \cos^2(nx/4) + \cos^2(nx)} \notag \\
& = \dfrac{1+ \O  \! \left(n^{-1/3}\right)}{4\pi} \int_{n^{-1/6}}^{n^{1/6}} \dfrac{\cos\left(x/\sqrt{n}\right)\, dx}{n \sin^2\left(x/4\sqrt{n}\right) + \cos^2\left(x/\sqrt{n}\right)} .
\end{align}
We define 
\begin{align*}
f_{n}(x):=  \dfrac{\cos\left(x/\sqrt{n}\right)}
{n \sin^2\left(x/4\sqrt{n}\right) + \cos^2\left(x/\sqrt{n}\right)}  \cdot \mathds{1}_{\left[ n^{-1/6},n^{1/6} \right]} (x) \ncomma x \in [0, \infty).
\end{align*}  
It is obvious that $ \sin\left(x/4\sqrt{n}\right) \geqslant x/2\pi\sqrt{n} $ and $ \cos\left(x/\sqrt{n}\right) \geqslant 1/2 $ on $ \left[ n^{-1/6},n^{1/6} \right].$ Therefore,
\begin{align*}
0 \leqslant f_{n}(x) \leqslant  \dfrac{4\pi^2}{x^2+\pi^2}
& =: g(x) \ \text{and } g\in L^1 \! \left(\left[0,\infty\right)\right).
\end{align*}  
Therefore, Lebesgue's Dominated Convergence Theorem says that
\begin{align} \label{f47}
& \lim_{n \to \infty} \int_{n^{-1/6}}^{n^{1/6}} \dfrac{\cos\left(x/\sqrt{n}\right)\, dx}{n \sin^2\left(x/4\sqrt{n}\right) + \cos^2\left(x/\sqrt{n}\right)} 
=16 \int_{0}^{\infty} \dfrac{dx}{x^2+16}=2\pi.
\end{align}
We then use change of variables $ u=x/\sqrt{n}$ and observe that
\begin{align*} 
& 0 \leqslant \int_{n^{-1/6}}^{n^{1/6}} \dfrac{\cos\left(x/\sqrt{n}\right)\, dx}{n \sin^2\left(x/4\sqrt{n}\right) + \cos^2\left(x/\sqrt{n}\right)} 
\leqslant \int_{n^{-1/6}}^{n^{1/6}} \dfrac{dx}{n \sin^2\left(x/4\sqrt{n}\right) + \cos^2\left(x/\sqrt{n}\right)} \notag \\
& = \sqrt{n} \int_{n^{-2/3}}^{n^{-1/3}} \dfrac{dx}{n \sin^2\left(x/4\right) + \cos^2\left(x\right)}	.
\end{align*}
The convexity of the sine function on $ \left[0, n^{-1/3}\right] $ gives us $ \sin(x) \geqslant x n ^{1/3} \sin\left(n ^{-1/3}\right),$  which consequently implies that
\begin{align*} 
& \int_{n^{-2/3}}^{n^{-1/3}} \dfrac{dx}{n \sin^2\left(x/4\right) + \cos^2\left(x\right)}
\leqslant 16 \int_{n^{-2/3}}^{n^{-1/3}} \dfrac{dx}{x^2 n^{5/3} \sin^2\left(n ^{-1/3}\right) + 16\cos^2\left(x\right)} \notag \\
& \leqslant 16 \int_{n^{-2/3}}^{n^{-1/3}} \dfrac{dx}{x^2 n^{5/3} \sin^2\left(n ^{-1/3}\right) + 16 - 16 x^2}
\leqslant 16 \int_{n^{-2/3}}^{n^{-1/3}} \dfrac{dx}{x^2 n^{5/3} \sin^2\left(n ^{-1/3}\right) + 16 \left(1- n^{-2/3}\right)}.
\end{align*}
We apply change of variables $ u=c_n x $ with 
$ c_n = \dfrac{n^{5/6}\sin (n^{-1/3})}{4\sqrt{1-n^{-2/3}}} $ to have that
\begin{align*} 
& 16 \int_{n^{-2/3}}^{n^{-1/3}} \dfrac{dx}{x^2 n^{5/3} \sin^2\left(n ^{-1/3}\right) + 16 \left(1- n^{-2/3}\right)}
= \frac{4n^{-5/6}}{\sqrt{1-n^{-2/3}}  \sin\left(n^{-1/3}\right)}
\int_{c_n n^{-2/3}}^{c_n n^{-1/3}} \dfrac{dx}{x^2 +1}
\notag \\
& \leqslant \frac{4n^{-5/6}}{\sqrt{1-n^{-2/3}}  \sin\left(n^{-1/3}\right)}
\int_{c_n n^{-2/3}}^{\infty} \dfrac{dx}{x^2 +1}
= \frac{n^{-5/6}\left( 2\pi - 4\arctan\left(c_n n^{-2/3}\right) \right)}{\sqrt{1-n^{-2/3}}  \sin\left(n^{-1/3}\right)}	.
\end{align*}
Thus, combining all the relations appeared after (\ref{f47}) gives us
\begin{align*} 
& 0 \leqslant \int_{n^{-1/6}}^{n^{1/6}} \dfrac{\cos\left(x/\sqrt{n}\right)\, dx}{n \sin^2\left(x/4\sqrt{n}\right) + \cos^2\left(x/\sqrt{n}\right)} 
\leqslant  \frac{n^{-1/3} \left( 2\pi - 4\arctan\left(c_n n^{-2/3}\right) \right)}{\sqrt{1-n^{-2/3}}  \sin\left(n^{-1/3}\right)}	.
\end{align*}
Note that 
$ 
\dfrac{n^{-1/3}}{\sqrt{1-n^{-2/3}} \sin \left(n^{-1/3}\right)} 
= 1+ \O  \! \left(n^{-2/3}\right)
$
as $ n \to \infty. $
We also know that $ \arctan(x) = \O  \! \left(x\right) $ as $ x\to 0, $ therefore 
$ 
\arctan\left(c_n n^{-2/3}\right) 
= \O  \! \left(c_n n^{-2/3}\right) = \O  \! \left(n^{-1/6}\right)
$
as $ n \to \infty. $
Thus, there exist $ C>0 $ and $ N \in \N $ such that for all $ n \geqslant N $, we have
\begin{align*} 
& 0 \leqslant \int_{n^{-1/6}}^{n^{1/6}} \dfrac{\cos\left(x/\sqrt{n}\right)\, dx}{n \sin^2\left(x/4\sqrt{n}\right) + \cos^2\left(x/\sqrt{n}\right)} 
\leqslant 2\pi + Cn^{-1/6}.
\end{align*}
Now, the last inequality along with (\ref{f47}) guarantees that
\begin{align} \label{f53}
& \int_{n^{-1/6}}^{n^{1/6}} \dfrac{\cos\left(x/\sqrt{n}\right)\, dx}{n \sin^2\left(x/4\sqrt{n}\right) + \cos^2\left(x/\sqrt{n}\right)} 
= 2\pi + \O  \! \left(n^{-1/6}\right).
\end{align}
Hence, (\ref{f39}) follows from (\ref{f46}) and (\ref{f53}).

Finally, we need to show
\begin{align} \label{f54}
\E \left[ N_{n} \left(\Omega_{k,4}\right) \right] 
& = \O\!\left(n^{-1/6}\right) \asntoinfty .
\end{align}
It is quite easy to check that
\begin{align*} 
A(x) 
= \O\!\left(n^{-1/3}\right) + \cos^2(nx) + \O\!\left(n^{-11/6}\right) 
= \cos^2(nx) + \O\!\left(n^{-1/3}\right) \asntoinfty \ \text{and} \ x \in \Omega_{k,4},
\end{align*}
\begin{align*} 
C(x) 
& = \frac{n^3}{16} + \frac{5n^3 \cos^2(n x/4)}{24} 
+ \O  \! \left(n^{13/6}\right) 
= \frac{n^3}{16} + \O\!\left(n^{5/3}\right)
+  \O  \! \left(n^{13/6}\right) \notag \\
& = \frac{n^3}{16} + \O\!\left(n^{13/6}\right)
\asntoinfty \ \text{and} \ x \in \Omega_{k,4}
\end{align*}
and
\begin{align*} 
B(x) 
= - \frac{n^2 \sin(nx/2)}{8} + \O\!\left(n^{7/6}\right)
= \O\!\left(n^{4/3}\right) + \O\!\left(n^{7/6}\right) = \O\!\left(n^{4/3}\right) \asntoinfty \ \text{and} \ x \in \Omega_{k,4}.
\end{align*}
Therefore,
\begin{align*} 
\sqrt{A(x)C(x) - B^2(x)}
& = \sqrt{\frac{n^3\cos^2(nx)}{16} + \O  \! \left(n^{8/3}\right)} 
= \frac{n^{3/2}\cos(nx) \sqrt{1+ \O  \! \left(n^{-1/3}\right)}}{4} \notag \\
& = \frac{n^{3/2}\cos(nx) \left(1+ \O  \! \left(n^{-1/3}\right)\right)}{4} \asntoinfty \ \text{and} \ x \in \Omega_{k,4}.
\end{align*}
Therefore, Proposition \ref{Prop4.1.1} implies that
\begin{align*} 
\E [ N_{n} \left(\Omega_{k,4}\right) ] 
&= \frac{1}{\pi} \displaystyle \int_{\Omega_{k,4}} 
\dfrac{n^{3/2} \left(1+ \O  \! \left(n^{-1/3}\right)\right) dx}
{4 \cos(nx) \left(1+ \O  \! \left(n^{-1/3}\right)\right)}  
=  \dfrac{n^{3/2}\left(1+ \O  \! \left(n^{-1/3}\right)\right) }{4\pi} \displaystyle \int_{\Omega_{k,4}} \dfrac{dx}{\cos(nx)} \notag \\
&  = \O  \! \left(n^{3/2}\right) \displaystyle \int_{\Omega_{k,4}} \dfrac{dx}{\cos(nx)}.
\end{align*}
We implement change of variables $ u=n \left( x-x_k \right) $ and observe that
\begin{align*} 
& \int_{\Omega_{k,4}} \dfrac{dx}{\cos(nx)} 
= \frac{1}{n} \int_{0}^{n^{-2/3}} \dfrac{dx}{\cos(x)} 
= \frac{\ln \left( \sec\left(n^{-2/3}\right) + \tan\left(n^{-2/3}\right) \right) }{n} 
= \O  \! \left(n^{-5/3}\right)
\end{align*}
because $ \ln \left( \sec(x) + \tan(x) \right) = \O  \! \left(x\right) $ as $ x\to 0 . $
Hence, (\ref{f54}) holds. Thus, putting together (\ref{f25}), (\ref{f31}), (\ref{f39}) and (\ref{f54}) implies (\ref{f24}) holds, so does (\ref{f22}). Thus, (\ref{f21}) and (\ref{f22}) lead us to the desired result, namely,
\begin{align} \label{f63}
\E \left[ N_{n} (0,2\pi) \right] 
& = 2 \, \E \left[ N_{n} (0,\pi) \right] 
= 2	\, \E \left[ N_{n} (G \cup F \cup U) \right] = \frac{n}{2} +\frac{\sqrt{13} n}{2\sqrt{3}}
+ \O  \! \left(n^{5/6}\right) \asntoinfty.
\end{align}
Lastly, (\ref{f63}) together with setting $ a=1/6 $ in (\ref{d16-1}) concludes the proof.
\end{proof}

\section*{Acknowledgment}
I am grateful to my advisor Igor Pritsker for leading me in the direction of this project and for all his helpful comments, suggestions and hints which significantly improved the paper. This manuscript is one of the main contributions to the author's Ph.D. dissertation under his supervision.

\bibliographystyle{amsplain}

\begin{thebibliography}{0}

\bibitem{ADP}  H. Angst, F. Dalmao and G. Poly, \textit{On the real zeros of random trigonometric polynomials with dependent coefficients}, Proc. Amer. Math. Soc. {\bf 147} (2019), 205-214.

\bibitem{BP}  A. Bloch and G. P\'olya, \textit{On the roots of certain algebraic equations}, Proc. London Math. Soc. {\bf 33} (1932), 102-114.

\bibitem{BS} A. T. Bharucha-Reid and M. Sambandham, \textit{Random polynomials}, Academic Press, Orlando, (1986).

\bibitem{CFI} J. Conrey, D. Farmer and O. Imamoglu, \textit{Palindromic random trigonometric polynomials}, Amer. Math. Soc. {\bf 137} (2009), 1835-1839.

\bibitem{D}  M. Das, \textit{The average number of real zeros of a random trigonometric polynomial}, Proc. Cambridge Philos. Soc. {\bf 64} (1968), 721-729

\bibitem{Dun}  J.E.A. Dunnage, \textit{The number of real zeros of a random trigonometric polynomial}, Proc. London Math. Soc. {\bf 16} (1966), 53-84.

\bibitem{Far1}  K. Farahmand, \textit{Topics in random polynomials}, Pitman Res. Notes Math. Series 393, Addison Wesley Longman Limited (1998).

\bibitem{Far2}  K. Farahmand, \textit{On the average number of level crossings of a random trigonometric polynomial}, Ann. Prob. {\bf 18} (1990), 1403-1409.

\bibitem{FL}  K. Farahmand and T. Li, \textit{Real zeros of three different cases of polynomials with random coefficients,} Rocky Mountain J. Math. {\bf 42} (2012), 1875-1892.

\bibitem{GR}  I. S. Gradshteyn and I. M. Ryzhik, \textit{Table of integrals, series and products}, Academic Press (1980).

\bibitem{Kac1} M. Kac, \textit{On the average number of real roots of a random algebraic equation}, Bull. Amer. Math. Soc. {\bf 49} (1943), 314-320.

\bibitem{Kac2} M. Kac, \textit{On the average number of real roots of a random algebraic equation. II}, Proc. London Math. Soc. {\bf 50} (1948), 390-408.

\bibitem{LO1} J. E. Littlewood and A. C. Offord, \textit{On the number of real roots of a random algebraic equation}, J. London Math. Soc. {\bf 13} (1938), 288-295.

\bibitem{LO2} J. E. Littlewood and A. C. Offord, \textit{On the number of real roots of a random algebraic equation. II}, Proc. Camb. Philos. Soc. {\bf 35} (1939), 133-148.

\bibitem{LPX} D. S. Lubinsky, I. E. Pritsker and X. Xie, \textit{Expected number of real zeros for random linear combinations of orthogonal polynomials}, Proc. Amer. Math. Soc., {\bf 144} (2016), 1631-1642.

\bibitem{PY} I. E. Pritsker and A. M. Yeager, \textit{Zeros of polynomials with random coefficients}, J. Approx. Theory {\bf 189} (2015), 88-100. 

\bibitem{RS} Q. I. Rahman and G. Schmeisser, \textit{Analytic theory of polynomials}, Clarendon Press, Oxford (2002). 

\bibitem{RS1} N Renganathan and M Sambandham, \textit{On the number of real zeros of a random trigonometric polynomial: Coefficients with non-zero mean}, J. Indian Math. Soc. {\bf 45} (1981), 193-203. 

\bibitem{RS2} N Renganathan and M Sambandham, \textit{On the average number of real zeros of a random trigonometric polynomial with dependent coefficients-II}, Indian J. Pure Appl. Math. {\bf 15} (1984), 951-956.

\bibitem{Sam} M. Sambandham, \textit{On the number of real zeros of a random trigonometric polynomial}, Trans. Amer. Math. Soc. {\bf 238} (1978), 57-70.


\bibitem{W} J. E. Wilkins, \textit{Mean Number of Real Zeros of a Random Trigonometric Polynomial}, Proc. Amer. Math. Soc. {\bf 111} (1991), no. 3, 851–863.


\end{thebibliography}

\end{document}